%
%
%
%
\documentclass[a4paper,10pt]{article}	
\usepackage[latin1]{inputenc}		
\usepackage{amsmath}						
\usepackage{amssymb}						%
\usepackage{amsthm}						%
\usepackage{amscd}						%
\usepackage{multicol}
\usepackage{makeidx}
\usepackage{graphicx}

\usepackage{hyperref}
\hypersetup{urlcolor=cyan}

\makeindex
%

\newcommand{\ve}{\mathbf{e}}
\newcommand{\vf}{\mathbf{f}}
\newcommand{\vu}{\mathbf{u}}
\newcommand{\vv}{\mathbf{v}}
\newcommand{\vw}{\mathbf{w}}
\newcommand{\vx}{\mathbf{x}}
\newcommand{\vy}{\mathbf{y}}

\newcommand{\set}[1]{\left\{#1\right\}}
\newcommand{\genset}[2]{\left\{\,#1{\,\, : \,\,}#2\,\right\}}
\newcommand{\setC}{{\mathbf{C}}}
\newcommand{\setN}{{\mathbf{N}}}

\newcommand{\setZ}{{\mathbf{Z}}}

\newcommand{\id}{\mathrm{id}}

\newcommand{\iso}{\cong}
\newcommand{\inv}{^{-1}}
\newcommand{\Aut}{\ensuremath{\mathrm{Aut}}}
\newcommand{\istmit}[1]{\overset{#1}{=}}
\newcommand{\folgtmit}[1]{\overset{#1}{\Longrightarrow}}
\newcommand{\scp}[2]{\ensuremath\left\langle #1 \,, #2 \right \rangle} 
\newcommand{\gen}[1]{\left\langle #1 \right\rangle} 
\newcommand{\trace}{\ensuremath{\mathrm{Tr}}}%

\newcommand{\mcb}{\mathcal{B}}

%
\swapnumbers
\newtheorem{theorem}{Theorem}[section]

\newtheorem{proposition}[theorem]{Proposition}

\theoremstyle{definition}
\newtheorem{definition}[theorem]{Definition}

\theoremstyle{remark}
\newtheorem{remark}[theorem]{Remark}


\newcommand{\act}{.}
\newcommand{\sumn}[2]{\ensuremath \sum_{#1 = 1}^n \, #2 }
\newcommand{\sumg}[2]{\ensuremath \frac{1}{|G|}\sum_{#1 \in G} \, #2 }
\def\lri/{ljum\-mig  ring}		
\def\irr/{irreducible}
\newcommand{\kwd}[1]{\emph{#1}\index{#1}} 
\newcommand{\inx}[1]{#1\index{#1}} 

\newcommand{\spec}{\ensuremath \mathrm{spec}}
\newcommand{\diag}{\ensuremath \mathrm{diag}}


\title{A Gentle Introduction to a Beautiful Theorem of Molien}
\author{Holger Schellwat\\\\
\href{mailto:holger.schellwat@oru.se}{\texttt{holger.schellwat@oru.se}},
Örebro universitet, Sweden\\
Universidade Eduardo Mondlane, Mo\c{c}ambique\\
}
\date{12 January, 2017}
 
%
%
\begin{document}
\maketitle

\begin{abstract}
The purpose of this note is to give an accessible proof of
Moliens Theorem in Invariant Theory, in the language of
today's Linear Algebra and Group Theory, in order to 
prevent this beautiful theorem from being forgotten.
\end{abstract}

\tableofcontents

\newpage
\section*{Introduction}\label{sintro}	

We present some memories of a visit to the ring zoo in 2004. This time 
we met an animal looking like a unicorn, known by the name of
invariant theory. It is rare, old, and very beautiful. The purpose
of this note is to give an almost self contained introduction to
and clarify the proof of the amazing theorem of Molien, as 
presented in \cite{Sloane1}. An  introduction into
this area, and much more, is contained in \cite{Sturmfels}.
There are many very short proofs of this theorem, for instance in 
\cite{Stanley}, \cite{Hu2}, and \cite{Tambour}. \par

Informally, Moliens Theorem is a power series generating function formula for
counting the dimensions of subrings of homogeneous polynomials 
of certain degree which are invariant under the action of a finite
group acting on the variables. 
As an apetizer, we display this stunning formula:
$$
\Phi_G(\lambda) := \frac{1}{|G|} \sum_{g\in G} \frac{1}{\det(\id - \lambda T_g)}  
$$ 
We can immediately see elements of linear algebra, representation theory, and 
enumerative combinatorics in it, all linked together.
The paper \cite{Sloane1} nicely shows  how
this method can be applied in Coding theory. For 
Coding Theory in general, see \cite{jubi}.\par

Before we can formulate the Theorem, we need to set the 
stage by looking at some Linear Algebra (see \cite{Roman}), 
Group Theory (see \cite{Hu}), and Representation Theory (see \cite{Sagan} and \cite{Tambour}).
\newpage 
\section{Preliminaries}\label{spreliminaries}

Let $V\iso \setC^n$ be a finite dimensional complex inner product  space with
orthonormal basis $\mcb = (\ve_1,\dots,\ve_n)$ and let $\vx = (x_1,\dots,x_n)$ 
be the orthonormal basis of the algebraic dual space $V^\ast$ satisfying
$\forall 1\le i,j \le n : x_i(\ve_j) = \delta_{ij}$. Let
$G$ be a finite group acting unitarily linear on $V$ from the left, that is,
for every $g\in G$ the mapping $V \to V, \vv \mapsto g\act \vv$ is 
a unitary bijective linear transformation. Using coordinates, this can be expressed as
$[g\act \vv]_\mcb = [g]_{\mcb,\mcb} [\vv]_\mcb$, where $[g]_{\mcb,\mcb}$ is unitary. 
Thus, the action is a unitary representation of $G$, or in other words, a $G$--module.
Note that we are using \inx{left composition} and column vectors, 
i.e. $\vv = (v_1, \dots , v_n) \istmit{convention} [v_1 \, v_2 \, \dots \, v_n]^\top$,
c.~f.~\cite{Anton}.  
\par

The elements of 
$V^\ast$ are \inx{linear forms}(linear functionals), and the elements $x_1,\dots, x_n$, 
looking like variables, are also linear forms, this will be important later. \par
Thinking of $x_1,\dots, x_n$ as variables, we may view (see \cite{Tambour}) $S(V^\ast)$,
the \kwd{symmetric algebra} on $V^\ast$ as the algebra 
$R := \setC[\vx] := \setC[x_1,\dots, x_n] $ of polynomial functions $V\to \setC$
or polynomials in these variables (linear forms). 
It is naturally graded by degree as
$R = \bigoplus_{d \in \setN} R_d$, where $R_d$ is the vector space
spanned by the polynomials of (total) degree $d$, 
in particular,  $R_0 = \setC$, and
$R_1 = V^\ast$.\par

The action of $G$ on $V$ can be lifted to an action on $R$.

\begin{proposition}\label{pindact}
Let $V$, $G$, $R$ as above. 
Then the mapping $\act : G \times R \to R, (g,f) \mapsto g\act f$
defined by $(g\act f) (\vv) := f(g\inv \act \vv)$ for $\vv \in V$
is a left action. 
\end{proposition}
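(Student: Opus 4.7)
The plan is to verify the two defining axioms of a left action, after first checking that the formula actually produces an element of $R$. The argument is almost entirely bookkeeping, but the presence of the inverse $g\inv$ is what makes everything come out right, so I would highlight that point explicitly.

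\textbf{Step 1: Well-definedness.} I would first argue that $g\act f$ lies in $R$. Since $G$ acts unitarily linearly on $V$, the map $\tau_g : V \to V$, $\vv \mapsto g\inv \act \vv$, is a linear bijection, represented in the basis $\mcb$ by the unitary matrix $[g\inv]_{\mcb,\mcb}$. Writing $f \in R_d$ in the generators $x_1,\dots,x_n$, the composition $f \circ \tau_g$ is a polynomial expression in the coordinate functions $x_i \circ \tau_g$, each of which is again a linear form in $x_1,\dots,x_n$. Hence $g\act f \in R$, and in fact $g\act f \in R_d$, which already records the useful fact that each homogeneous component $R_d$ is $G$-stable.

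\textbf{Step 2: The identity axiom.} For $g = e$, the hypothesis that $\act$ is a left action on $V$ gives $e\inv \act \vv = e \act \vv = \vv$, so
$$(e \act f)(\vv) = f(e\inv \act \vv) = f(\vv),$$
proving $e\act f = f$.

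\textbf{Step 3: Compatibility with multiplication.} For $g,h \in G$ and $\vv \in V$, I would compute both sides of $(gh)\act f = g\act(h\act f)$ directly from the definition:
$$((gh)\act f)(\vv) = f\bigl((gh)\inv \act \vv\bigr) = f\bigl(h\inv \act (g\inv \act \vv)\bigr),$$
where the second equality uses $(gh)\inv = h\inv g\inv$ together with the fact that $\act$ is already a left action on $V$. On the other hand,
$$(g\act(h\act f))(\vv) = (h\act f)(g\inv \act \vv) = f\bigl(h\inv \act (g\inv \act \vv)\bigr),$$
by applying the definition twice. The two expressions agree, which finishes the verification.

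\textbf{Main obstacle.} There is no deep obstacle here; the proposition is a routine check. The only point that requires care, and which I would flag for the reader, is the appearance of $g\inv$ in the definition: had one written $(g\act f)(\vv) := f(g\act \vv)$ instead, the composition in Step~3 would reverse the order of $g$ and $h$ and produce a right action. Using $g\inv$ is precisely the standard device that turns the contravariant pullback on functions into a left action.
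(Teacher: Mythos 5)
Your proof is correct and follows essentially the same route as the paper: a direct verification of the identity axiom and of $(gh)\act f = g\act(h\act f)$ using $(gh)\inv = h\inv g\inv$, exactly as in the paper's two-item check. Your extra Step 1 on well-definedness (that $g\act f$ again lies in $R$, indeed in $R_d$) is a harmless addition that the paper defers to the following proposition on the induced algebra automorphisms.
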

\begin{proof} For $\vv \in V$,  $g,h \in G$, and $f \in R$ we check
\begin{enumerate}
	\item $(1\act f)(\vv) = f(1\inv\act \vv) = f(1\act \vv) = f(\vv)$
	\item \begin{multline*}
			((hg)\act f)(\vv) = f((hg)\inv \act \vv) = 
				f((g\inv h\inv)\act \vv) =\\ f (g\inv \act (h\inv \act \vv))
				= (g\act f)(h\inv \act \vv) = (h\act (g\act f))(\vv)
				\end{multline*}
\end{enumerate}
\end{proof} 
 
In fact, we know more.
\begin{proposition}\label{pindact2}
Let $V$, $G$, $R$ as above. 
For every $g \in G$, the mapping $T_g: R \to R, f \mapsto g\act f$
is an algebra automorphism preserving the grading, i.e.
$g.R_d \subset R_d$ (here we do not bother about surjectivity).
\end{proposition}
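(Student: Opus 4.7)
The plan is to verify four properties of $T_g$ in turn: additivity, homogeneity (together giving linearity), multiplicativity, grading preservation, and bijectivity. Linearity and multiplicativity I would obtain by a direct pointwise computation from the defining formula $(g\act f)(\vv) = f(g\inv\act \vv)$. For example, for $f_1,f_2 \in R$ and $\vv \in V$,
\[
(T_g(f_1+f_2))(\vv) = (f_1+f_2)(g\inv\act\vv) = f_1(g\inv\act\vv) + f_2(g\inv\act\vv) = (T_g f_1)(\vv) + (T_g f_2)(\vv),
\]
and similarly for scalar multiplication and for the product $f_1 f_2$; the key fact being used is merely that addition, scalar multiplication, and multiplication in $R$ are defined pointwise, so they commute with precomposition by the fixed map $g\inv\act (-)$.

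The bijectivity falls out of Proposition~\ref{pindact}: since $\act$ is a left action, $T_g \circ T_{g\inv} = T_{gg\inv} = T_1 = \id_R$ and likewise $T_{g\inv}\circ T_g = \id_R$, so $T_g$ has two-sided inverse $T_{g\inv}$. Combined with the ring homomorphism property, this gives that $T_g$ is an algebra automorphism.

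The only step with real content is grading preservation, and this is where I would expect a careful reader to demand justification. The point is that $g\inv$ acts on $V$ as a linear map (by hypothesis the action is linear), so each coordinate function $\vv \mapsto x_i(g\inv\act\vv) = (g\act x_i)(\vv)$ is still a linear form on $V$, i.e.\ $g\act x_i \in R_1$. I would make this explicit by writing $g\inv\act\vv = \sum_j a_{ij}(g) \ve_j$ in coordinates (where $[g\inv]_{\mcb,\mcb} = (a_{ij}(g))$), so that $g\act x_i = \sum_j a_{ji}(g)\, x_j \in R_1$. Then for a monomial $x_{i_1}\cdots x_{i_d} \in R_d$, multiplicativity of $T_g$ (already established) gives
\[
T_g(x_{i_1}\cdots x_{i_d}) = (g\act x_{i_1})\cdots (g\act x_{i_d}),
\]
which is a product of $d$ linear forms and hence lies in $R_d$. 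Extending linearly yields $T_g(R_d) \subset R_d$, as required.

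The main obstacle, such as it is, is bookkeeping: keeping the order of the arguments straight (note the inverse appearing in the definition of $\act$ on $R$ and the resulting transpose in coordinates) and resisting the temptation to conflate $x_i$ the basis vector of $V^\ast$ with $x_i$ the variable in $\setC[\vx]$ — a distinction the paper has deliberately blurred. Once grading preservation is established, the remaining claims are purely formal.
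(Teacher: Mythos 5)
Your proof is correct, and for the parts that carry the algebraic content (additivity, scalar compatibility, multiplicativity by pointwise computation; grading preservation by applying multiplicativity to monomials) it matches the paper's argument -- in fact you are more explicit than the paper, which disposes of the grading with ``by part 2 it is clear,'' whereas you spell out the one fact that makes it work, namely that $g\act x_i$ is again a linear form because $g\inv$ acts linearly, so monomials of degree $d$ go to products of $d$ linear forms. Where you genuinely diverge is bijectivity: you get it formally from Proposition \ref{pindact}, since $T_{g\inv}\circ T_g = T_{g\inv g} = T_1 = \id_R$ and symmetrically, so $T_{g\inv}$ is a two-sided inverse (and, once the grading is preserved, inverts each restriction $T_g^d$). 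The paper instead proves injectivity on each $R_d$ by a kernel argument: $g\act f = 0$ as a function forces $f$ to vanish identically on $V$, and over a field of characteristic $0$ a polynomial function vanishing identically is the zero polynomial (citing Cox--Little--O'Shea), with surjectivity then noted from the existence of inverses in $G$. Your route is shorter and purely formal; the paper's heavier argument is not wasted, though, because the ``zero function $\Rightarrow$ zero polynomial'' step is reused immediately after the proposition to conclude that the degree-$d$ monomials in the $x_i$ are linearly independent, i.e.\ form a basis of $R_d$ -- something your argument does not deliver as a by-product. One cosmetic caution: your coordinate bookkeeping for $g\act x_i$ (whether the matrix entry is $a_{ij}$ or $a_{ji}$) is written ambiguously; it does not affect the conclusion that $g\act x_i$ lies in the span of $x_1,\dots,x_n$, but if you display the formula you should fix the convention for $[g\inv]_{\mcb,\mcb}$ and the column-vector convention consistently.
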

\begin{proof} For $\vv \in V$,  $g\in G$,  $c\in \setC$, and $f,f' \in R$ we check
\begin{enumerate}
\item	\begin{multline*}
			(g\act (f+f'))(\vv) =
			(f+f')(g\inv \act \vv) =
			f(g\inv \act \vv)   + f'(g\inv \act \vv) =\\
			(g\act f)(\vv) + (g\act f')(\vv) =
			(g\act f + g\act f')(\vv)
			\textrm {, thus } g\act (f+f') =g\act f + g\act f'
			\end{multline*}
\item	\begin{multline*}
			(g\act (f\cdot f'))(\vv) =
			(f\cdot f')(g\inv \act \vv) =
			f(g\inv \act \vv)   \cdot  f'(g\inv \act \vv) =\\
			(g\act f)(\vv) \cdot  (g\act f')(\vv) =
			(g\act f \cdot  g\act f')(\vv)
			\textrm {, thus } g\act (f\cdot f') =g\act f \cdot  g\act f'
			\end{multline*}
\item $
			(g\act (cf))(\vv) = (cf)(g\inv \act \vv) = c (f(g\inv \act \vv)) =
			c ((g\act f)(\vv)) = (c (g\act f))(\vv) 
			$
\item By part $2.$ it is clear that the grading is preserved.
\item To show that $f \mapsto g\act f$ is bijective it is enough to 
			show that this mapping is injective on the finite dimensional
			homogeneous components
			$R_d$. Let us introduce a name for this mappig, say
			$T_g^d : R_d \to R_d, f \mapsto g\act f$. Now
			$f \in \ker(T_g^d)$  implies that
			$g\act f = 0 \in R_d$, i.e. $g\act f$ is a polynomial mapping from
			$V$ to $\setC$ of degree $d$ vanishing identically,
			$\forall \vv \in V: (g\act f)(\vv) = 0$. By definition of the 
			extended action we have
			$\forall \vv \in V: f(g\inv \act \vv) = 0$.
			Since $G$ acts on $V$ this implies that
			$\forall \vv \in V : f(\vv) = 0$, so $f$ is the zero mapping.
			Since our ground field has characteristic $0$, this implies that
			$f$ is the zero polynomial, which we may view as an element of every $R_d$.
			See for instance \cite{Cox}, proposition 5 in section 1.1. 
\item Note that every $T_g^d$ is also surjective, since all group elements have their inverse in $G$.			
\end{enumerate}
\end{proof} 

Both propositions together give us a homomorphism from $G$ into
$\Aut(R)$. They also clarify the r\^ole of the \emph{induced} 
matrices, which are classical in this area, as mentionend in \cite{Sloane1}.
Since the monomials $x_1,\dots,x_n$ of degree one form a basis for
$R_1$, it follows from the proposition that their products
$\vx_2 := (x_1^2,x_1 x_2,x_1 x_3,\dots,x_1 x_n, x_2^2,x_2 x_3,\dots)$ 
form a basis for $R_2$, and, in general, the monomials of degree $d$
in the linear forms (!) $x_1,\dots,x_n$   form a basis $\vx_d$ of
$R_d$. Clearly, they certainly span $R_d$, and by the last observation
in the last proof they are linearly independent.\par

\begin{definition}\label{dinduced}
In the context from above, that is $g \in G$, $f \in R^d$, and $\vv \in V$,
we define
$$
T_g^d : R_d \to R_d, f \mapsto g\act f : R^d \to \setC, \vv \mapsto f(g\inv \act \vv) = f(T_{g\inv} (\vv))  
.$$
\end{definition}

\begin{remark}\label{rinduced}
In particular, we have $(T_g^1 (f))(\vv) = f(T_{g\inv}(\vv) ),$ see
proposition \ref{pprop0} below.
\end{remark}

Keep in mind that a function $f \in R_d$ maps  to $T_g^d (f) = g\act f$.
Setting $A_g := [T_g^1]_{\vx,\vx}$, then 
$A_g^{[d]} := [T_g^d]_{\vx_d,\vx_d}$ is the $d$--th induced matrix
in \cite{Sloane1}, because $T_g^1(f\cdot f') = T_g^1(f)\cdot T_g^1(f')$.
Also, if $f,f'$ are eigenvectors of $T_g^1$ corresponding to the eigenvalues
$\lambda,\lambda'$, then $f\cdot f'$ is an eigenvector of $T_g^2$
with eigenvalue $\lambda \cdot \lambda'$, because
$T_g(f\cdot f') = T_g(f) \cdot T_g(f') = (\lambda f) \cdot (\lambda' f')
= (\lambda \lambda')(f \cdot f')$. All this generalizes to $d>2$, we 
will get back to that later. \par

We end this section by verifying two little facts needed in the next section.
\begin{proposition}\label{ppropd}
The \kwd{first induced operator} of the inverse of a group element $g\in G$
is given by $T_{g\inv}^1  = (T_g^1)\inv$.
\end{proposition}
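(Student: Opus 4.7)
The plan is to derive the identity directly from the two action axioms verified in Proposition \ref{pindact}, combined with the observation from Proposition \ref{pindact2} that each $T_g^1$ is a well-defined (in fact bijective) linear endomorphism of $R_1$. Everything follows once the action axioms are translated from the ``dot'' notation into statements about composition of the operators $T_g^1$.

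First I would record the translation. By definition $T_g^1(f) = g \act f$ for $f \in R_1$, so the identity axiom $1 \act f = f$ becomes $T_1^1 = \id_{R_1}$, and the associativity axiom $(hg)\act f = h \act (g \act f)$ becomes the composition law $T_{hg}^1 = T_h^1 \circ T_g^1$. Both are immediate rewrites of parts (1) and (2) of Proposition \ref{pindact}, restricted to $R_1$ (which is invariant under each $T_g$ by Proposition \ref{pindact2}).

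Next I would specialize the composition law to $h = g\inv$. This yields
\[
T_{g\inv}^1 \circ T_g^1 \;=\; T_{g\inv g}^1 \;=\; T_1^1 \;=\; \id_{R_1},
\]
and symmetrically $T_g^1 \circ T_{g\inv}^1 = T_{g g\inv}^1 = \id_{R_1}$. Thus $T_{g\inv}^1$ is a two-sided inverse of $T_g^1$, and since inverses in the monoid of linear maps on $R_1$ are unique, $T_{g\inv}^1 = (T_g^1)\inv$.

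There is essentially no obstacle here: the proposition is a bookkeeping consequence of the fact that a left group action, when passed through a functorial construction, produces a group homomorphism into the automorphism group of the target. The only point worth flagging is that one must first invoke Proposition \ref{pindact2} to know that $T_g^1$ actually has an inverse at all, so that writing $(T_g^1)\inv$ is meaningful; after that, the equality is forced by uniqueness of inverses.
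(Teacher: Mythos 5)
Your proof is correct and uses essentially the same argument as the paper: the identity $T_{g\inv}^1 \circ T_g^1 = \id$ via the action axioms $g\inv\act(g\act f) = (g\inv g)\act f = f$ from Proposition \ref{pindact}. The only cosmetic difference is that you verify both compositions and invoke uniqueness of two-sided inverses, whereas the paper checks just one side and appeals to $\dim(V^\ast) < \infty$; your route even makes the appeal to Proposition \ref{pindact2} for invertibility superfluous, since exhibiting a two-sided inverse already proves bijectivity.
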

\begin{proof}
Since $\dim(V^\ast) < \infty$, it is sufficient to prove
that $T_{g\inv}^1 \circ  T_{g}^1  = \id_{V^\ast}$. 
Keep in mind that $(T_g^1 (f))(\vv) = f (T_{g\inv} (\vv))$. For arbitrary
$f \in V^\ast$ we see that
\begin{align*}
(T_{g\inv}^1 \circ  T_{g}^1 )(f) = T_{g\inv }^1 (  T_{g}^1 (f)) = T_{g\inv}^1 ( g\act f)
   = g\inv \act ( g\act f) = (g\inv g)\act f = f.
\end{align*}
\end{proof}

We will be mixing group action notation and composition
freely, depending on the context. The following observation is a
translation device.

\begin{proposition}\label{pprop0}
For $g \in G$ nd  $f \in V^\ast$ the following holds:
$$
T^1(f) = g\act f = f \circ T_{g\inv}.
$$
\end{proposition}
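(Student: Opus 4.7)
The plan is to unwind definitions; there is no computation of substance. The first equality $T_g^1(f) = g\act f$ is immediate from Definition \ref{dinduced} specialised to $d=1$. For the second equality I would fix an arbitrary $\vv \in V$ and apply the defining formula of the lifted action from Proposition \ref{pindact}, obtaining $(g\act f)(\vv) = f(g\inv \act \vv)$. Interpreting $g\inv \act \vv$ as $T_{g\inv}(\vv)$, the value of the linear operator on $V$ that represents $g\inv$ in the original unitary representation, rewrites the right-hand side as $(f \circ T_{g\inv})(\vv)$. Since $\vv$ was arbitrary, the two elements of $V^\ast$ agree pointwise, hence as linear forms, which is the claim.

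The only point requiring attention is the notational overloading of the letter $T$: it is used both for the operators on $V$ that encode the original action (so $T_g : \vv \mapsto g\act \vv$) and for the operators $T_g^d$ on the graded components $R_d$ of the lifted action. Once that distinction is kept clearly in mind, the statement is simply a restatement of how the lifted action $g\act f$ was defined in Proposition \ref{pindact}, and I expect no real obstacle. The proposition's value is purely as a translation device allowing later arguments to switch freely between the $g\act f$ notation and the composition notation $f\circ T_{g\inv}$, which will be convenient when one wants to track eigenvalues and traces of the induced operators and relate them to those of $T_g$ on $V$.
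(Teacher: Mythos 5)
Your argument is correct and coincides with the paper's own proof: fix $\vv\in V$, unwind $(g\act f)(\vv)=f(g\inv\act\vv)$ from the definition of the lifted action, and identify $g\inv\act\vv$ with $T_{g\inv}(\vv)$. Nothing more is needed.
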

\begin{proof}
For $\vv \in V$ we see
$(T^1(f))(\vv) =  (g\act f)(\vv) \istmit{def}  f(g\inv \act \vv  ) =  f( T_{g\inv}(\vv)  ).$
\end{proof}

\newpage
\section{The Magic Square}\label{ssquare}
Remember that we require a unitary representation of $G$, 
that is the operators $T_g : V \to V$ need to be unitary, 
i.e. $\forall g \in G : (T_g)\inv = (T_g)^\ast$. 
The first goal of this sections is to show that this implies
that the induced operators $T_g^d : R_d \to R_d, f \mapsto g\act f$
are also unitary. We saw that $T_g^1 = V^\ast$, the algebraic 
dual of $V$. In order to understand the operator duals of 
$V$ and $V^\ast$ we need to look on their inner products first.
We may assume that the operators $T_g$ are unitary with respect to
the standard inner product 
$\scp{\vu}{\vv} = [\vu]_{\mcb, \mcb} \bullet \overline{[\vv]_{\mcb, \mcb}}$,  
where $\bullet$ denotes the dot product.\par

Before we can speak of unitarity of the induced  operators $T_g^d$
we have to make clear which inner product applies on $R^1 = V^\ast$.
Quite naively, for $f,g \in  V^\ast$ we are tempted to define 
$\scp{f}{g} = [f]_{\vx, \vx} \bullet \overline{[g]_{\vx, \vx}}$. \par

We will motivate this in a while, but first we take a look
at the diagram in \cite{Roman}, chapter10, with our objects:

$$
\begin{CD}
\quad @<T_g^\times<< \quad\\
R^1 = V^\ast @>T_g^1>>V^\ast = R^1 \\
@VVPV@VVPV\\
V@>T_g>>V\\
\quad @<T_g^\ast<< \quad\\
\end{CD}
$$

Here $P$ (\lq \lq Rho\rq\rq\ ) denotes the \inx{Riesz map}, see \cite{Roman}, 
Theorem 9.18, where it is called $R$, but $R$ denotes already our big ring. 
We started by looking at the operator $T_g$, which is unitary, so 
its inverse is the Hilbert space adjoint $T_g^\ast$. Omiting the names
of the bases we have $[T_g^\ast] = [T_g]^\ast $. We 
also see the operator adjoint  $T_g^\times$ with matrix
$[T_g^\times] = [T_g]^\top$, the transpose.
However, the arrow for $T_g^1$ is not in the original diagram, but 
soon we will see it there, too. \par

Fortunately, the Riesz map $P$ turns a linear form into a vector
and its inverse $\tau : V \to V^\ast$ maps a vector to 
a linear form, both are conjugate isomorphisms. 
This is mostly all we need in order to show that $T_g^1$ is unitary. 
In the following three propositions we use
that $V$ has the orthonormal basis $\mcb$ and that $V^\ast$ has the orthonormal basis
$\vx$.

\begin{proposition}\label{ppropa}
For every  $f \in V^\ast$ the coordinates of its Riesz 
vector are given by 
$$[P(f)]_\ve = (\overline{f(\ve_1)}, \dots ,  \overline{f(\ve_n)}).$$
\end{proposition}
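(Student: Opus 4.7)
The plan is to unwind the definition of the Riesz map on the orthonormal basis $\mcb$, and read off the coordinates one by one.

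First I would recall the defining property of $P$: for every $f \in V^\ast$, the vector $P(f) \in V$ is the unique element satisfying
\begin{equation*}
\forall \vv \in V : f(\vv) = \scp{\vv}{P(f)}.
\end{equation*}
With the convention adopted in the excerpt, namely $\scp{\vu}{\vv} = [\vu]_{\mcb,\mcb} \bullet \overline{[\vv]_{\mcb,\mcb}}$, the inner product is linear in the first slot and conjugate linear in the second, so $\scp{\ve_i}{\ve_j} = \delta_{ij}$ as desired.

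Next I would expand $P(f)$ in the orthonormal basis $\mcb$ as $P(f) = \sum_{j=1}^n c_j \ve_j$ for some (unknown) scalars $c_j \in \setC$, and evaluate $f$ at each $\ve_i$:
\begin{equation*}
f(\ve_i) = \scp{\ve_i}{P(f)} = \Bigl\langle \ve_i, \sum_{j=1}^n c_j \ve_j \Bigr\rangle = \sum_{j=1}^n \overline{c_j}\, \scp{\ve_i}{\ve_j} = \overline{c_i}.
\end{equation*}
Taking complex conjugates yields $c_i = \overline{f(\ve_i)}$, which is exactly the claimed coordinate formula.

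There is essentially no obstacle here: the only subtle point is book-keeping the conjugate-linear slot of the inner product correctly, since a slip in convention would flip a conjugate in the wrong place. Once the slot convention is fixed as in the excerpt, the proposition is an immediate computation on the orthonormal basis $\mcb$.
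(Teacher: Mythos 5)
Your proof is correct and is essentially the same argument as the paper's: both rest on the defining property $f(\vv)=\scp{\vv}{P(f)}$, orthonormality of $\mcb$, and conjugate-linearity in the second slot, evaluated on the basis vectors $\ve_j$. The only cosmetic difference is that you solve directly for the unknown coordinates $c_i$, whereas the paper verifies the candidate vector $\sum_i \overline{f(\ve_i)}\,\ve_i$ by applying $\tau=P\inv$ to it; in fact your bookkeeping is cleaner than the paper's (whose final line has a slight index slip).
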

\begin{proof}
Writing $\tau$ for the inverse of $P$, we need to show that
$$
P(f) = \sumn{i}{\overline{f(\ve_i)}}\ve_i    
$$ 
which is equivalent to
$$
f = \tau \left ( \sumn{i}{\overline{f(\ve_i)}}\ve_i  \right ).
$$
It is sufficient to show the latter for values of $f$ on the basis 
vectors $\ve_j$, $1 \le j \le n$. We obtain
\begin{align*}
\left (\tau \left ( \sumn{i}{\overline{f(\ve_i)}}\ve_i  \right )\right ) (\ve_j) &=
\scp{\ve_j}{ \left ( \sumn{i}{\overline{f(\ve_i)}}\ve_i  \right )}   
=
\sumn{i}{\scp{\ve_j}{ \left ( {\overline{f(\ve_i)}}\ve_i  \right )} }  \\
&= \overline{\overline{f(\ve_i)}} \sumn{i}{\scp{\ve_j}{ \ve_i  } } 
 = f(\ve_i) \cdot 1.
\end{align*}
\end{proof}

In particular, this implies that $P(x_i) = \ve_i$.

\begin{proposition}\label{ppropb}
Our makeshift inner product on $V^\ast$ satisfies
$$
\scp{f}{g} = \scp{P(f)}{P(g)}
,$$
where $f,g \in V^\ast$.
\end{proposition}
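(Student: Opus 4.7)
The plan is to reduce both sides of the claimed identity to explicit component sums in the orthonormal bases at hand, then read off the equality. All the real work has already been done in Proposition \ref{ppropa}; what remains is essentially unpacking definitions.

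First I would nail down the coordinate description on $V^\ast$. For any $h \in V^\ast$, expand $h = \sum_i c_i x_i$ in the dual basis and evaluate on $\ve_j$. Using $x_i(\ve_j) = \delta_{ij}$ gives $c_j = h(\ve_j)$, so $[h]_\vx = (h(\ve_1),\dots,h(\ve_n))^\top$. Substituting into the makeshift definition on $V^\ast$ yields
$$ \scp{f}{g} \;=\; [f]_\vx \bullet \overline{[g]_\vx} \;=\; \sum_{i=1}^n f(\ve_i)\,\overline{g(\ve_i)}. $$

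Next I would compute the right-hand side. Proposition \ref{ppropa} tells me directly that $[P(f)]_\mcb = (\overline{f(\ve_1)},\dots,\overline{f(\ve_n)})^\top$, and likewise for $g$. Since $\mcb$ is orthonormal, the standard inner product on $V$ can be read off coordinate-wise, giving
$$ \scp{P(f)}{P(g)} \;=\; [P(f)]_\mcb \bullet \overline{[P(g)]_\mcb} \;=\; \sum_{i=1}^n \overline{f(\ve_i)}\cdot g(\ve_i). $$

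A term-by-term comparison then closes the argument. The one subtlety I expect to confront is conjugation bookkeeping: the Riesz map $P$ is conjugate-linear (visible already from the $\overline{f(\ve_i)}$ in Proposition \ref{ppropa}), so it flips the position of the complex conjugate between the two inner products. Aligning the sesquilinearity conventions on $V$ and on $V^\ast$ so that $P$ becomes a conjugate-isometry is the only obstacle; no computation beyond what is above is needed, and the proposition falls out immediately.
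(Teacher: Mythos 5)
Your coordinate computation is correct as far as it goes, and it is essentially a general-coordinate version of what the paper does (the paper reduces to the basis pairs $(x_i,x_j)$ and uses $P(x_i)=\ve_i$). But your last step does not close the argument: the two sums you derive, $\scp{f}{g}=\sum_{i} f(\ve_i)\,\overline{g(\ve_i)}$ and $\scp{P(f)}{P(g)}=\sum_{i} \overline{f(\ve_i)}\,g(\ve_i)$, are not equal term by term --- they are complex conjugates of one another. Concretely, with $n=1$, $f=x_1$, $g=ix_1$, the left side is $-i$ and the right side is $i$. The reason is structural: since $P$ is conjugate-linear, the pairing $(f,g)\mapsto\scp{P(f)}{P(g)}$ is conjugate-linear in $f$ and linear in $g$, whereas the makeshift product $(f,g)\mapsto\scp{f}{g}$ is linear in $f$ and conjugate-linear in $g$. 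So ``aligning the sesquilinearity conventions'' is not a bookkeeping afterthought from which the proposition ``falls out immediately''; it amounts to changing the definition of one of the two pairings. What your computation actually proves is $\scp{P(f)}{P(g)}=\overline{\scp{f}{g}}=\scp{g}{f}$, i.e.\ that $P$ is a conjugate isometry, so the displayed identity holds exactly when $\scp{f}{g}$ is real --- in particular on the basis, where $\scp{x_i}{x_j}=\delta_{ij}=\scp{\ve_i}{\ve_j}$.

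In fairness, the paper's own proof has the same wrinkle: it verifies the identity only on the pairs $(x_i,x_j)$ and tacitly treats that as sufficient, which would require both sides to be sesquilinear in the same slots, and they are not. For everything the proposition is used for later (orthonormal bases go to orthonormal bases, $T_g^1$ is unitary), the conjugate-isometry statement is all that is needed, and your computation delivers it. So the fix is either to state and use the conjugated identity $\scp{P(f)}{P(g)}=\scp{g}{f}$, or to define the inner product on $V^\ast$ with the conjugate on the first slot, making $P$ an honest isometry; as written, the final ``term-by-term comparison'' is a genuine gap.
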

\begin{proof}
By our vague definition we have
$\scp{f}{g} = [f]_{\vx, \vx} \bullet \overline{[g]_{\vx, \vx}}$. 
It is enough to show that 
$\scp{x_i}{x_j} = \scp{P(x_i)}{P(x_j)}$. From the comment after the proof of 
Proposition \ref{ppropa} we obtain  
$$\scp{P(x_i)}{P(x_j)} = \scp{\ve_i}{\ve_j} = \delta_{ij} = \ve_i \bullet \ve_j 
=  [x_i]_{\vx, \vx} \bullet \overline{[x_j]_{\vx, \vx}}
.$$
\end{proof}
Hence, our guess for the inner product on $V^\ast$ was correct.
We will now relate the Riesz vector of $f \in V^\ast$ to the
Riesz vector of $f \circ T_g\inv$. Recall that the Riesz vector of $f \in V^\ast$
is the unique vector $\vw = P(f)$ such that $f(\vv) = \scp{\vv}{\vw}$ for 
all $\vv \in V$. If $f \ne 0$ it can be found by scaling any nonzero
vector in the cokernel of $f$, which is one--dimensional, see \cite{Roman},
in particular Theorem 9.18.

\begin{proposition}\label{pprope}
Let $T_g : V \to V$ be unitary, $f \in V^\ast$, $\vw = P(f)$ the  vector of $f \in V^\ast$.
Then $T_g(\vw)$ is the Riesz vector of $f \circ T_g\inv$, i.e. the Riesz vector of $T^1_g(f)$. 
\end{proposition}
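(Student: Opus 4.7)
The plan is to unwind the defining property of the Riesz map and exploit unitarity of $T_g$. Recall that $P(h)$, for $h \in V^\ast$, is characterized as the unique vector $\vu \in V$ with $h(\vv) = \scp{\vv}{\vu}$ for all $\vv \in V$. So it suffices to verify that $T_g(\vw)$ satisfies this defining property for the linear form $f \circ T_g\inv$; uniqueness then delivers the conclusion.

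I would first rewrite the left-hand side using the hypothesis $\vw = P(f)$: for arbitrary $\vv \in V$,
\begin{align*}
(f \circ T_g\inv)(\vv) = f(T_g\inv(\vv)) = \scp{T_g\inv(\vv)}{\vw},
\end{align*}
where the last equality is the Riesz defining relation for $\vw = P(f)$. Next, I would use that $T_g$ is unitary, so $T_g\inv = T_g^\ast$ (the Hilbert space adjoint), and then push $T_g\inv$ across the inner product:
\begin{align*}
\scp{T_g\inv(\vv)}{\vw} = \scp{T_g^\ast(\vv)}{\vw} = \scp{\vv}{T_g(\vw)}.
\end{align*}
Combining, $(f \circ T_g\inv)(\vv) = \scp{\vv}{T_g(\vw)}$ for every $\vv \in V$, which is exactly the assertion that $T_g(\vw)$ is the Riesz vector of $f \circ T_g\inv$. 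Finally, by Proposition \ref{pprop0}, $T_g^1(f) = f \circ T_g\inv$, so $T_g(\vw) = P(T_g^1(f))$ as claimed.

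There is no real obstacle here; the only subtlety is being careful about conjugate-linearity. Because $P$ is a \emph{conjugate} isomorphism and the inner product is conjugate linear in the second slot, the adjoint identity $\scp{T_g^\ast \vv}{\vw} = \scp{\vv}{T_g \vw}$ must be applied in the form valid for our convention (which is the standard convention used earlier via $\scp{\vu}{\vv} = [\vu]_\mcb \bullet \overline{[\vv]_\mcb}$). Once that bookkeeping is respected, the argument is a two-line chain of equalities plus an appeal to uniqueness of Riesz vectors.
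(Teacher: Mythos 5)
Your proof is correct and follows essentially the same route as the paper: apply the Riesz defining relation $f(\vv)=\scp{\vv}{\vw}$, use unitarity of $T_g$ to move $T_g\inv$ across the inner product to get $\scp{\vv}{T_g(\vw)}$, and identify $f\circ T_g\inv$ with $T_g^1(f)$. The only difference is cosmetic — the paper additionally assumes $f\ne 0$ and draws an orthogonal-decomposition diagram, neither of which the computation actually needs, so your leaner version is fine.
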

\begin{proof}
We may assume that $f \ne 0$.
Using the notation $\gen{\vw}$ for the one--dimensional subspace spanned by $\vw$, 
we start with a little diagram:
$$
\gen{\vw} \odot \ker(f) \overset{T_g}{\longrightarrow} \gen{T_g(\vw)} \odot \ker(f \circ T_g\inv ), 
$$
wheere $\odot$ denotes the orthogonal direct sum.\par

We need to show that $f \circ T_g\inv = \scp{\cdot}{T_g(\vw)}$, i.e.
that $(f \circ T_g\inv)(\vv) = \scp{\vv}{T_g(\vw)}$ for all $\vv \in V$.
Since $\vw = P(f)$ the  vector of $f$, we have $f(\vv) = \scp{\vv}{\vw}$  for all $\vv \in V$.
We obtain
\begin{align*}
(f \circ T_g\inv)(\vv) &= \scp{T_g\inv(\vv)}{\vw} \istmit{T_g \,\,\mathrm{unitary} }
                          \scp{\vv}{T_g(\vw)}. 
\end{align*}
From remark \ref{rinduced} we conclude that $f \circ T_g\inv = T^1_g(f)$. 
\end{proof}

Observe that proposition \ref{pprope} implies the commutativity of the following 
two diagrams.
$$
\begin{CD}
V^\ast @>T_g^1>>V^\ast\\
@VVPV@VVPV\\
V@>T_g>>V\\
\end{CD}
\qquad \mathrm{and } \qquad
\begin{CD}
V^\ast @>(T_g^1)\inv>>V^\ast\\
@VVPV@VVPV\\
V@>(T_g)\inv>>V\\
\end{CD}
$$
Indeed, \ref{pprope} implies 
\begin{align}
P \circ T_g^1 &= T_g \circ P \\ 
P \circ (T_g^1)\inv &= (T_g)\inv \circ P
\end{align}

\begin{proposition}\label{pproplink}
The first induced operator $T_g^1$ is unitary.
\end{proposition}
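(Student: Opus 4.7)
The plan is to verify directly that $T_g^1$ preserves the inner product on $V^\ast$, exploiting the two structural facts that have just been assembled: Proposition~\ref{ppropb}, which identifies the inner product on $V^\ast$ with the pullback under the Riesz map $P$ of the inner product on $V$, and Proposition~\ref{pprope}, which established the intertwining relation $P\circ T_g^1 = T_g\circ P$. Together with the standing hypothesis that $T_g$ is unitary on $V$, these should give the result in a short diagram chase.

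Concretely, I would take arbitrary $f, f'\in V^\ast$ and carry the quantity $\scp{T_g^1(f)}{T_g^1(f')}$ through four rewrites. First, Proposition~\ref{ppropb} converts it into $\scp{P(T_g^1(f))}{P(T_g^1(f'))}$. Second, the intertwining identity replaces each occurrence of $P\circ T_g^1$ by $T_g\circ P$, producing $\scp{T_g(P(f))}{T_g(P(f'))}$. Third, unitarity of $T_g$ on $V$ cancels the two $T_g$'s, yielding $\scp{P(f)}{P(f')}$. Fourth, a second application of Proposition~\ref{ppropb} (read the other way) brings us back to $\scp{f}{f'}$. Chaining the four equalities proves inner-product preservation.

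Since $V^\ast$ is finite-dimensional, an inner-product preserving linear operator is automatically bijective, so unitarity in the strict sense follows without any further work; alternatively, Proposition~\ref{ppropd} already exhibits the inverse of $T_g^1$ explicitly as $T_{g^{-1}}^1$, so invertibility is not in doubt.

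I do not anticipate any serious obstacle. The genuine content of the statement has been absorbed into Propositions~\ref{ppropb} and \ref{pprope}, and what remains is only a bookkeeping step around the commutative square drawn just before the statement. The one place to stay alert is the conjugation convention in the definition of the inner product on $V^\ast$, but since Proposition~\ref{ppropb} has already certified that the makeshift definition agrees with the Riesz pullback, no extra conjugation bookkeeping intrudes into the chase.
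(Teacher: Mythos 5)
Your proposal is correct and follows essentially the paper's own route: both arguments pull everything back to $V$ through the Riesz map via Proposition~\ref{ppropb} and the intertwining relation of Proposition~\ref{pprope}, and then invoke unitarity of $T_g$. The only (harmless) difference is that the paper verifies the adjoint--inverse identity $\scp{T_g^1(f)}{h} = \scp{f}{(T_g^1)\inv(h)}$ using both relations (1) and (2), while you verify inner-product preservation using (1) twice and settle invertibility by finite-dimensionality (or Proposition~\ref{ppropd}); these are equivalent characterizations of unitarity in finite dimension.
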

\begin{proof}
We may use that $T_g$ is unitary, that is, 
$$
\scp{T_g(\vv)}{\vw} = \scp{\vv}{(T_g)\inv(\vw)}
= \scp{\vv}{(T_{g\inv})(\vw)} \qquad (\ast)
.$$
Let $f,h \in V^\ast$ arbitrary, $\vw := P(f)$, and $\vu := P(h)$.
We need to check that $\scp{(T_g^1)(f)}{h} = \scp{f}{(T_g^1)\inv(h)}$.
We see that
\begin{align*}
\scp{(T_g^1)(f)}{h} &\istmit{\mathrm{proposition }\ref{ppropb}} 
        \scp{(P\circ T_g^1)(f)}{P(h)} \istmit{(1)} \scp{(T_g\circ P )(f)}{P(h)} \\
		&= \scp{(T_g( P ))(f)}{P(h)} = \scp{T_g(\vw)}{\vu}	\istmit{\ast} \scp{\vw}{T_g\inv(\vu)}	\\
	  &= \scp{P(f)}{T_g\inv(P(h))} = \scp{P(f)}{(T_g\inv \circ P ) (h)}\\
		&\istmit{(2)} \scp{P(f)}{(  P \circ (T_g^1)\inv) (h)} =  \scp{P(f)}{  P  ((T_g^1)\inv (h))}\\
		&= \scp{f}{(T_g^1)\inv(h)}
\end{align*}
\end{proof}

After having looked at eigenvalues we will see that this generalizes to higher degree,
that $T_g^d$ is diagonalizable for all $d\in \setZ^+$. But first let us look at the matrix version 
of proposition \ref{pproplink}.

\begin{proposition}\label{ppropf}
$$
[T^1_g]_{\vx,\vx} = \overline{[T_g]_{\ve,\ve}}
$$
\end{proposition}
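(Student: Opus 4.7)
The plan is to exploit the commutative diagram $P \circ T_g^1 = T_g \circ P$ from Proposition \ref{pprope} (equation (1)) together with the fact, observed immediately after Proposition \ref{ppropa}, that the Riesz map satisfies $P(x_i) = \ve_i$ for all $i$. The only subtlety I need to track carefully is that $P$ is \emph{conjugate}-linear, not linear, and this is precisely what will produce the complex conjugation on the right-hand side.

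Concretely, I would write $A = [T_g^1]_{\vx,\vx}$ and $B = [T_g]_{\ve,\ve}$, so by definition
\begin{align*}
T_g^1(x_j) &= \sum_{i=1}^n A_{ij}\, x_i, \\
T_g(\ve_j) &= \sum_{i=1}^n B_{ij}\, \ve_i.
\end{align*}
Now I would apply $P$ to the first identity. Using that $P$ is conjugate-linear and that $P(x_i) = \ve_i$, I get
$$
P(T_g^1(x_j)) = \sum_{i=1}^n \overline{A_{ij}}\, P(x_i) = \sum_{i=1}^n \overline{A_{ij}}\, \ve_i.
$$
On the other hand, by the commutativity relation (1),
$$
P(T_g^1(x_j)) = T_g(P(x_j)) = T_g(\ve_j) = \sum_{i=1}^n B_{ij}\, \ve_i.
$$
Comparing coefficients in the basis $\ve$ gives $B_{ij} = \overline{A_{ij}}$ for all $i,j$, which is precisely $[T_g]_{\ve,\ve} = \overline{[T_g^1]_{\vx,\vx}}$, equivalently $[T_g^1]_{\vx,\vx} = \overline{[T_g]_{\ve,\ve}}$.

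There is no real obstacle here; the statement is almost immediate from the diagram. The only thing to be careful about is not to slip into treating $P$ as linear, since pulling the scalars $A_{ij}$ out of $P$ is exactly where the conjugation enters. Everything else is just reading off matrix entries from basis expansions, and the conclusion follows directly from Proposition \ref{pprope}.
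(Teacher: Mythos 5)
Your proof is correct and is essentially the paper's own argument in mirror image: the paper writes $T_g^1 = P\inv \circ T_g \circ P$, applies it to $x_i$, and picks up the conjugation from the conjugate-linearity of $P\inv$, while you apply $P$ to the expansion of $T_g^1(x_j)$ and use conjugate-linearity of $P$ together with $P(x_i)=\ve_i$. Same diagram, same key facts, same conclusion.
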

\begin{proof}
Let $A := [T_g]_{\mcb, \mcb} = [A_1| \cdots |A_i| \cdots | A_n] = [a_{i,j}]$ and 
$B := [T_g^1]_{\vx,\vx} = [B_1| \cdots |B_i| \cdots | B_n] = [b_{i,j}]$. 
We will use the commutativity of the diagram, i.e.
$P\inv \circ T_g \circ P = T_g$, which we will mark as $\square$. No, the proof 
is not finished here.
We get $T_g(\ve_i) = A_i = \sumn{k}{a_{k,i}} \ve_k$ and
\begin{align*}
T_g^1 (x_i) &\istmit{\square} (P\inv \circ T_g \circ P)(x_i) = P\inv ( T_g ( P  (x_i)) \\
     &\istmit{\ref{ppropa}} P\inv ( T_g (  \ve_i)) =  P\inv \left ( \sumn{k}{a_{k,i}} \ve_k \right) 
		  \istmit{\textrm{konj.}} \sumn{k}{  \overline{a_{k,i}} P\inv \left ( \ve_k \right) }\\
		 &\istmit{\ref{ppropa}} \sumn{k}{\overline{a_{k,i}} x_k}	
\end{align*}
On the other hand,
$[T^1_g(x_i)]_{\vx} = [T^1_g]_{\vx,\vx} \ve_i = B_i$ implies 
$T^1_g(x_i) = \sumn{k}{b_{k,i}}\ve_k$.
Together we obtain  $b_{k,i} = \overline{a_{k,i}}$, and the proposition follows.
\end{proof}
\newpage
\section{Averaging over the Group}\label{sreynolda}
Now we apply averaging to obtain self-adjoint operators.
\begin{definition}\label{dreynolds} We define the following operators:
\begin{enumerate}
	\item $\displaystyle \hat{T} : V\to V, \vv \mapsto  \hat{T}(\vv) 
	      := \sumg{g}{T_g(\vv)}$ 
	\item $\displaystyle \hat{T^1} : V^\ast\to V^\ast, f \mapsto  \hat{T^1}(f) 
	      := \sumg{g}{T^1_g(f)}$ 
\end{enumerate}

\end{definition}

These are sometimes called the \kwd{Reynolds} operator of $G$.

\begin{proposition}\label{preynolds}
The operators $ \hat{T}$ and $\hat{T^1}$ are self-adjoint (Hermitian).
\end{proposition}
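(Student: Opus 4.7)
The plan is to compute the Hilbert space adjoint of each Reynolds operator directly and show that it equals the operator itself, exploiting the standard reindexing trick $g \mapsto g\inv$ on the group.

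First I would handle $\hat{T}$. Since taking the adjoint is conjugate linear and the scalar $1/|G|$ is real, I get
$$
\hat{T}^\ast = \frac{1}{|G|}\sum_{g\in G} T_g^\ast.
$$
Because the representation is unitary by hypothesis, $T_g^\ast = (T_g)\inv$, and since $g \mapsto T_g$ is a group homomorphism, $(T_g)\inv = T_{g\inv}$. Thus
$$
\hat{T}^\ast = \frac{1}{|G|}\sum_{g\in G} T_{g\inv}.
$$
The assignment $g \mapsto g\inv$ is a bijection $G \to G$, so reindexing the summation (letting $h = g\inv$) returns $\frac{1}{|G|}\sum_{h\in G} T_h = \hat{T}$. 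This establishes $\hat{T}^\ast = \hat{T}$.

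For $\hat{T^1}$ the argument is identical in structure, but it relies on two earlier results rather than on the hypothesis. Proposition \ref{pproplink} tells us that each $T_g^1$ is unitary, hence $(T_g^1)^\ast = (T_g^1)\inv$, and proposition \ref{ppropd} identifies $(T_g^1)\inv$ with $T_{g\inv}^1$. Combining these, the same reindexing $g \mapsto g\inv$ gives $(\hat{T^1})^\ast = \hat{T^1}$.

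There is no real obstacle here; the only thing to be careful about is the conjugate linearity of the adjoint combined with the fact that $1/|G|$ is real (so it pulls out unchanged), and invoking the correct earlier propositions (\ref{ppropd} and \ref{pproplink}) that package the unitarity of the induced operator and the behavior of the inverse. The whole proof is essentially two lines for each operator.
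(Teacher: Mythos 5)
Your argument is correct and is essentially the paper's own proof in operator form: the paper verifies $\scp{\hat{T}(\vv)}{\vw}=\scp{\vv}{\hat{T}(\vw)}$ directly and you instead compute $\hat{T}^\ast$ termwise, but both rest on exactly the same three ingredients — unitarity ($T_g^\ast=(T_g)\inv$, resp.\ unitarity of $T_g^1$ from Proposition \ref{pproplink} together with Proposition \ref{ppropd}), the identification $(T_g)\inv=T_{g\inv}$, and the reindexing $g\mapsto g\inv$ over the group. No gap; the proofs coincide in substance.
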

\begin{proof}
The idea of the averaging trick is that if $g\in G$ runs through all group 
element and $g' \in G$ is fixed, then the products  $g'g$ run also through all group elements.
We will make use of the facts that every $T_g$ and every $T^1_g$ is unitary. 
\begin{enumerate} 
	\item We need to show that $\scp{\hat{T}(\vv)}{\vw} = \scp{\vv}{\hat{T}(\vw)}$ for 
	arbitrary $\vv,\vw \in V$. We obtain
	\begin{align*}
	\scp{\hat{T}(\vv)}{\vw} &=\scp{\sumg{g}{T_g(\vv)}}{\vw} =  \sumg{g}{\scp{T_g(\vv)}{\vw}}\\
	        &\istmit{unit.} \sumg{g}{\scp{\vv}{(T_g)\inv(\vw)}} = \sumg{g}{\scp{\vv}{(T_{g\inv})(\vw)}} \\
					&=  \sumg{g'}{\scp{\vv}{(T_{g'})(\vw)}} =  \scp{\vv}{\hat{T}(\vw)}
	\end{align*}
	\item The same proof, \emph{mutitis mutandis}, replacing $\hat{T} \leftrightarrow \hat{T^1}$,
	$T_g \leftrightarrow T_g^1$, $\vv \leftrightarrow f$, and $\vw \leftrightarrow h$ shows that
	$\scp{\hat{T^1}(f)}{h} = \scp{f}{\hat{T^1}(h)}.$
\end{enumerate}
\end{proof}

Consequently, $ \hat{T}$ and $\hat{T^1}$ are unitarily diagonalizable with real spectrum.

\begin{proposition}\label{pproph}
The operators $ \hat{T}$ and $\hat{T^1}$ are \inx{idempotent}, i.e.
\begin{enumerate}
	\item $ \hat{T} \circ  \hat{T} =  \hat{T} $ 
	\item $ \hat{T^1} \circ  \hat{T^1} =  \hat{T^1} $ .
\end{enumerate}
In particular, the eigenvalues of both operators are either $0$ or $1$.
\end{proposition}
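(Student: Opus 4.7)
The plan is to exploit the same averaging trick from Proposition \ref{preynolds}: composing two Reynolds averages yields a double sum over $G\times G$, which collapses back to a single average because, for each fixed $g\in G$, the translated set $\{gh : h\in G\}$ is again all of $G$.

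For part 1, I would first use linearity of $T_g$ (which is automatic since each $T_g$ is a linear operator) to pull the outer sum inside:
\begin{align*}
(\hat{T}\circ\hat{T})(\vv) &= \frac{1}{|G|}\sum_{g\in G} T_g\!\left(\frac{1}{|G|}\sum_{h\in G} T_h(\vv)\right) = \frac{1}{|G|^2}\sum_{g\in G}\sum_{h\in G} T_g(T_h(\vv)).
\end{align*}
Next I would use that $g\mapsto T_g$ is a group homomorphism (this is exactly what makes the action an action), so $T_g\circ T_h = T_{gh}$. Then, for each fixed $g$, reindex the inner sum by $h' := gh$; since left multiplication by $g$ is a bijection of $G$, the inner sum equals $\sum_{h'\in G} T_{h'}(\vv) = |G|\,\hat{T}(\vv)$. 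The factor $|G|$ cancels one copy of $1/|G|$, and the remaining outer sum over $g$ contributes another factor of $|G|$, leaving exactly $\hat{T}(\vv)$.

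For part 2, I would simply transcribe the same proof \emph{mutatis mutandis}, replacing $T_g$ by $T_g^1$ and $\vv$ by $f\in V^\ast$; the only fact required is that $g\mapsto T_g^1$ is also a group homomorphism, which is precisely the content of Proposition \ref{pindact} (the induced map is a left action).

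Finally, for the eigenvalue statement, I would combine idempotence with the diagonalizability noted after Proposition \ref{preynolds}. If $\vv$ is an eigenvector of $\hat{T}$ with eigenvalue $\lambda$, then
\begin{align*}
\lambda\vv = \hat{T}(\vv) = (\hat{T}\circ\hat{T})(\vv) = \hat{T}(\lambda\vv) = \lambda^2\vv,
\end{align*}
so $\lambda(\lambda-1)=0$, giving $\lambda\in\{0,1\}$; the same argument applies to $\hat{T^1}$. I don't anticipate any real obstacle: the only subtlety is being careful that $T_g\circ T_h = T_{gh}$ (not $T_{hg}$) so that the reindexing direction matches, but since the sum ranges over all of $G$ this causes no trouble in either order.
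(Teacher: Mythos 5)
Your proof is correct and takes essentially the same approach as the paper: the paper packages the same reindexing trick as a preliminary absorption identity $T_s \circ \hat{T} = \hat{T}$ for fixed $s\in G$ and then composes, whereas you expand $\hat{T}\circ\hat{T}$ into a double sum over $G\times G$ and collapse the inner sum via the bijection $h\mapsto gh$, and both rest on $T_g\circ T_h = T_{gh}$ (resp.\ Proposition \ref{pindact} for $T_g^1$). For the eigenvalue claim the paper argues that the minimal polynomial divides $\lambda(\lambda-1)$, which is just a repackaging of your direct computation $\lambda\vv=\lambda^2\vv$ on an eigenvector (and, as you implicitly note, neither argument actually needs the diagonalizability remark).
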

\begin{proof}
Again, we show only one part, the other part is analog. To begin with,
let $s \in G$ be fixed. Then
	\begin{align*}
	T_s \circ \hat{T} &= T_s \circ \sumg{g}{T_g} = \sumg{g}{T_s \circ T_g} \\
	                  &= \sumg{g}{T_{sg} } = \sumg{g'}{T_{g'} } = \hat{T}. 
	\end{align*}
	From this it follows that
	\begin{align*}
	\hat{T} \circ \hat{T} &= \left (\sumg{g}{T_g} \right) \circ \hat{T}  
	                   =  \sumg{g}{T_g\circ \hat{T} } \istmit{above} 
										 =  \sumg{g}{ \hat{T} }  \\
										&= \frac{1}{|G|} \cdot |G| \cdot\hat{T} = \hat{T}.
	\end{align*}
	From $ \hat{T} \circ  \hat{T} =  \hat{T} $  we conclude that $ \hat{T} \circ  (\hat{T} - \id) =  0 $.
	Thus the minimal polynomial of $T$ divides the polynomial $\lambda (\lambda - 1)$, so
	all eigenvalues are contained in $\set{0,1}$.
\end{proof}

We will now look at the eigenvalues of $T_g$ and $T^1_g$ and their 
interrelation. Since both operators are unitary, their eigenvalues
have absolute value $1$. 

\begin{proposition}\label{pvictor}

\begin{enumerate}
	\item If $\vv \in V$ is an eigenvector of $T_g$ for the eigenvalue $\lambda$, 
	      then $\vv$ is an eigenvector of $T_{g\inv}$ for the eigenvalue $\overline{\lambda} = \frac{1}{\lambda}$. 
	\item If $f\in V^\ast$ is an eigenvector of $T^1_g$ for the eigenvalue $\lambda$, 
	      then $f$ is an eigenvector of $T^1_{g\inv}$ for the eigenvalue $\frac{1}{\lambda}$. 
	\item If $f\in V^\ast$ is an eigenvector of $T^1_g$ for the eigenvalue $\lambda$,
			  then $P(f) \in V$ is an eigenvector of $T_g$  for the eigenvalue $\overline{\lambda} = \frac{1}{\lambda}$.
	\item If $\vv \in V$ is an eigenvector of $T_g$ for the eigenvalue $\lambda$, 
				then $P\inv (\vv) \in V^\ast$  is an eigenvector of $T^1_g$ for the eigenvalue $\overline{\lambda}=\frac{1}{\lambda}$.
\end{enumerate}
\end{proposition}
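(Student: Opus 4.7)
My plan is to leverage two structural facts established earlier in the excerpt: first, that both $T_g$ and $T_g^1$ are unitary (Proposition \ref{pproplink}), so their eigenvalues lie on the unit circle, giving $\overline{\lambda} = 1/\lambda$ automatically; and second, that the Riesz map $P$ is a conjugate-linear isomorphism satisfying the intertwining relation $P \circ T_g^1 = T_g \circ P$ from equation~(1). Everything else is a one-line manipulation.

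For part~1, I would simply apply $T_{g\inv} = (T_g)\inv$ to the eigenvalue equation $T_g(\vv) = \lambda \vv$, obtaining $\vv = \lambda T_{g\inv}(\vv)$, and divide by $\lambda$. Unitarity of $T_g$ guarantees $\lambda \ne 0$ and $1/\lambda = \overline{\lambda}$. Part~2 is identical, using Proposition \ref{ppropd} to replace $T_{g\inv}^1$ with $(T_g^1)\inv$ and Proposition \ref{pproplink} to ensure $|\lambda| = 1$.

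For part~3, I would apply $P$ to both sides of $T_g^1(f) = \lambda f$. The left side becomes $T_g(P(f))$ by equation~(1), while the right side becomes $\overline{\lambda}\, P(f)$ because $P$ is conjugate-linear. This immediately yields $T_g(P(f)) = \overline{\lambda}\, P(f)$. Part~4 is the mirror image: setting $f := P\inv(\vv)$ and using the inverted intertwining $T_g^1 = P\inv \circ T_g \circ P$, I would compute $T_g^1(f) = P\inv(T_g(\vv)) = P\inv(\lambda \vv) = \overline{\lambda}\, P\inv(\vv) = \overline{\lambda}\, f$, where the middle equality again invokes conjugate-linearity, this time of $P\inv$.

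The only genuine subtlety here is keeping the conjugate-linearity of $P$ visible — it is precisely the place where a careless reader would write $\lambda$ instead of $\overline{\lambda}$ and miss the content of the proposition. Once that is made explicit, there is no further obstacle, and parts~3 and~4 are dual to each other in an obvious way.
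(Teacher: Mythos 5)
Your proposal is correct and follows essentially the same route as the paper: parts 1--2 by applying the inverse operator to the eigenvalue equation (with unitarity giving $|\lambda|=1$, hence $1/\lambda=\overline{\lambda}$), and parts 3--4 by transporting the eigenvalue equation through the intertwining relations $P\circ T_g^1 = T_g\circ P$ and its inverse, with the conjugate-linearity of $P$ producing the $\overline{\lambda}$. Making that conjugate-linearity explicit is a welcome clarification of a step the paper leaves implicit, but the argument is the same.
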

\begin{proof}
We will make use of the commutativity of Proposition \ref{pprope}. Observe that $g\act\vv = T_g(\vv)$
and $g\act f = f \circ T_g$.
\begin{enumerate}
	\item \quad
	\begin{align*}
	T_g(\vv) &= g\act\vv  = \lambda \vv \implies g\inv \act g\act\vv  = g\inv \act \lambda \vv  
	             \implies g\inv \act g\act\vv  =  \lambda  g\inv \act\vv \\
					 & \implies	\vv  =  \lambda  g\inv \act\vv \implies T_{g\inv}(\vv) = g\inv \act \vv = \frac{1}{\lambda} \vv					
	\end{align*}
		\item \quad
	\begin{align*}
	T^1_g(f) &= g\act f  = \lambda f \implies g\inv \act g\act f  = g\inv \act \lambda f  
	             \implies g\inv \act g\act f  =  \lambda  g\inv \act f \\
					 & \implies	f  =  \lambda  g\inv \act f \implies T^1_{g\inv}(f) = g\inv \act f = \frac{1}{\lambda} f					
	\end{align*}
		\item \quad
	\begin{align*}
	T^1_g(f) = \lambda f &\folgtmit{P\circ} P(T^1_g(f)) = P(\lambda f) \folgtmit{(1)} T_g(P(f)) = P(\lambda f)  \\
	                     &\implies T_g(P(f)) = \overline{\lambda} P( f)  =\frac{1}{\lambda} P( f) 
	\end{align*}
		\item \quad
	\begin{align*}
	T_g(\vv)  = \lambda \vv &\folgtmit{P\inv \circ} P\inv(T_g(\vv))  = P\inv (\lambda \vv) 
	          \folgtmit{\square}  (T_g^1 \circ P\inv)(\vv) =  \overline{\lambda} P\inv (\vv) \\
						&\implies T_g^1 ( P\inv (\vv)) = \frac{1}{\lambda}  P\inv (\vv)
	\end{align*}

\end{enumerate}
\end{proof}

This implies that if we consider the union of the spectra over all $g\in G$,
then we obtain the same (multi)set, no matter if we take $T_g$ or  $T^1_g$.  \par
\newpage
\section{Eigenvectors and eigenvalues}\label{svictor}

Now we continue from where we left at the end of section \ref{spreliminaries},
fixing one group element $g \in G$ and compare  $T_g^1$ with $T_g^d$ for $d > 1$.
By a method called \kwd{stars and bars} it is easy to see that $$\tilde{d} := \dim_\setC(R_d) 
= \frac{(n+d+1)!}{(n-1)!d!} .$$

Remember that every $T_g^1$ is unitarily diagonalizable with eigenvalues of absolute value $1$.
If $\spec(T_g^1)  = (\omega_1,\dots , \omega_n) \in U(1)^n $, 
then $V^\ast$ has an orthonormal basis $\vy_g^1  := (y_{1}, \dots ,y_{n} )$, 
 such that $T_g^1 (y_{i}) = \omega_i \cdot y_{i} $ for all $1 \le i \le n$,
and $[T_g^1]_{\vy_g^1,\vy_g^1} = \diag(\omega_1,\dots , \omega_n)$.
Moreover, 
$$
[T_g^1]_{\vy_g^1,\vy_g^1} = [\id]_{\vy_g^1, \vx} \cdot [T_g^1]_{\vx,\vx} \cdot [\id]_{ \vx, \vy_g^1} 
 = \diag(\omega_1,\dots , \omega_n) ,
$$
where $[\id]_{\vy_g^1, \vx} = [\id]_{ \vx, \vy_g^1}^\ast$ is unitary. \par

For $d>1$ put 
$$
\vx^d := (x_1^d, x_2^d, \dots , x_n^d, x_1^{d-1}x_2 ,x_1^{d-1}x_3 , \dots ,x_1^{d-1}x_n, \dots )
 =: (\tilde{x_1}, \dots, \tilde{x}_{\tilde{d}}) 
,$$
all monomials in the $x_i$ of total degree $d$, numbered from $1$ to $\tilde{d}$.

These are certainly linear independent, since we have
no relations amongst the variables,  and span $R_d$, since 
every monomial of total degree $d$ can be written as a linear combination of these.
So the form a basis for $R_d$. We will not require that this can be made into an
orthonormal basis, we do not even consider any inner product on $R_d$ for $d>1$.

We rather want to establish that 
$$
\vy^d := (y_1^d, y_2^d, \dots , y_n^d, y_1^{d-1}y_2 ,y_1^{d-1}y_3 , \dots ,y_1^{d-1}y_n, \dots )
 =: (\tilde{y_1}, \dots, \tilde{y}_{\tilde{d}}) 
$$
is a basis of eigenvectors of $T_g^d$ diagonalizing $T_g^d$, using the 
same numbering.  

Arranging the eigenvalues of $T_g^1$ in the sam way we put
$$
\mathbf{\omega}^d := (\omega_1^d, \omega_2^d, \dots , \omega_n^d, \omega_1^{d-1}\omega_2 ,\omega_1^{d-1}\omega_3 , 
\dots ,\omega_1^{d-1}\omega_n, \dots )
 =: (\tilde{\omega_1}, \dots, \tilde{\omega}_{\tilde{d}}). 
$$

Now we establish that the $\tilde{y_i}$, $1\le i \le \tilde{d}$ are
the eigenvectors for the eigenvalues  $\tilde{\omega_1}$ of $T_g^d$.

\begin{proposition}\label{pinducedeigen}
In the context above,
$$
T_g^d (\tilde{y_i}) = \tilde{\omega_i} \cdot \tilde{y_i}
$$
for all $1\le i \le \tilde{d}$.
\end{proposition}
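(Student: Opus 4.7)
The plan is to use the fact, already established in Proposition \ref{pindact2}, that $T_g^d$ is (the restriction to $R_d$ of) an algebra automorphism of $R$ which preserves the grading. In particular, $T_g$ is multiplicative, and its restriction to $R_1 = V^\ast$ is exactly $T_g^1$. So the strategy is to write each $\tilde{y}_i$ as a product of factors drawn from $\vy_g^1$ and then push $T_g^d$ through the product using multiplicativity.

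First I would note that each basis element $\tilde{y}_i$ has the form $y_{j_1}^{a_1}\cdots y_{j_n}^{a_n}$ with non-negative integer exponents satisfying $a_1+\cdots+a_n = d$, and that by the numbering convention the corresponding $\tilde{\omega}_i$ is $\omega_{j_1}^{a_1}\cdots\omega_{j_n}^{a_n}$. Then, since $T_g^d$ and $T_g^1$ are both instances of the same action $f\mapsto g\act f$, and since this action is a ring homomorphism, I would compute
\begin{align*}
T_g^d(\tilde{y}_i) &= T_g^d\bigl(y_{j_1}^{a_1}\cdots y_{j_n}^{a_n}\bigr)
  = T_g^1(y_{j_1})^{a_1}\cdots T_g^1(y_{j_n})^{a_n} \\
  &= (\omega_{j_1} y_{j_1})^{a_1}\cdots(\omega_{j_n} y_{j_n})^{a_n}
  = \omega_{j_1}^{a_1}\cdots\omega_{j_n}^{a_n}\cdot y_{j_1}^{a_1}\cdots y_{j_n}^{a_n}
  = \tilde{\omega}_i\cdot \tilde{y}_i,
\end{align*}
which is exactly the asserted identity.

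There is essentially no obstacle here beyond bookkeeping: once one observes that $T_g$ commutes with products and that the $y_j$ are $T_g^1$-eigenvectors, the multiplicativity of eigenvalues for products of eigenvectors (which was already highlighted just after Definition \ref{dinduced} for $d=2$) gives the result by a trivial induction on $d$. The only mild subtlety is to be explicit that the numbering of monomials in the $\tilde{y}_i$ and the matching numbering of products of $\omega_j$ in the $\tilde{\omega}_i$ are one and the same — which is forced by the definitions given immediately before the proposition.

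As a small corollary one reads off that $T_g^d$ is diagonalizable, that $\vy^d$ is a basis of $R_d$ consisting of $T_g^d$-eigenvectors, and that $\spec(T_g^d)$ is the multiset $(\tilde{\omega}_1,\ldots,\tilde{\omega}_{\tilde{d}})$, all of which will be needed for Molien's formula later on.
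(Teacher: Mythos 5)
Your proposal is correct and follows essentially the same route as the paper's own proof: write $\tilde{y}_i$ as a monomial $\prod_j y_j^{\epsilon_j}$ of total degree $d$, use the multiplicativity of the lifted action (Proposition \ref{pindact2}) to pull $T_g^d$ inside the product as $T_g^1$ on each linear factor, and then apply the eigenvector relation $T_g^1(y_j)=\omega_j y_j$ to collect the eigenvalue $\tilde{\omega}_i$. The only difference is presentational (your explicit remark on the matching numbering of $\tilde{y}_i$ and $\tilde{\omega}_i$), so no further changes are needed.
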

\begin{proof}
The key is proposition \ref{pindact2}, as in the preliminary observations
at the end of section \ref {spreliminaries}. Let 
$$
\tilde{y_i} = \prod_{j=1}^{n} y_j^{\epsilon_j}
$$
and
$$
\tilde{\omega_i} = \prod_{j=1}^{n} \omega_j^{\epsilon_j}
,$$
where $\epsilon_j \in \setN$ and the sum of these exponents is $d$.
Then
\begin{align*}
T_g^d (\tilde{y_i}) &= T_g^d \left ( \prod_{j=1}^{n} y_j^{\epsilon_j} \right ) 
                     =   \prod_{j=1}^{n} T_g^1 \left ( y_j^{\epsilon_j} \right )  
										 =   \prod_{j=1}^{n} \omega_j^{\epsilon_j}   y_j^{\epsilon_j} 
										 =  \tilde{\omega_i} \cdot \tilde{y_i}
\end{align*}

\end{proof}

As a consequence, $R_d$ has a basis of eigenvectors of $T_g^d$ and 
$T_g^d$ is similar to the \inx{diagonal matrix}
$\diag(\tilde{\omega_1}, \dots, \tilde{\omega}_{\tilde{d}})$.

\newpage
\section{Moliens Theorem}\label{sstart}

We will now make some final preparations and then present 
the proof of Moliens Theorem.\par

For $f \in R$ and $g \in G$ we say that $f$ is an \kwd{invariant} 
of $g$ if $g\act f = f$ and that $f$ is a (simple) invariant
of $G$ if $\forall g \in G :  g\act f = f$. 
The method of averaging from section \ref{sreynolda} can also be applied to create invariants:

\begin{proposition}\label{ppropg}
For  $f\in V^\ast$ put $\hat{f} := \hat{T^1} (f)$. Then $\hat{f}$ is an 
invariant of $G$.
\end{proposition}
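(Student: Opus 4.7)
The plan is to imitate almost verbatim the averaging argument used in the first half of the proof of Proposition \ref{pproph}, but applied to $\hat{T^1}$ instead of $\hat{T}$. What must be shown is that for every $s \in G$ the element $\hat{f} \in V^\ast$ satisfies $s \act \hat{f} = \hat{f}$, which by definition of the induced action is the same as $T^1_s(\hat{f}) = \hat{f}$.

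First I would unwind the definition of $\hat{f}$ and pull $T^1_s$ inside the sum, using linearity of $T^1_s$. Then I would invoke that $g \mapsto T^1_g$ is a group homomorphism (Propositions \ref{pindact} and \ref{pindact2}), so $T^1_s \circ T^1_g = T^1_{sg}$. This converts the sum $\frac{1}{|G|}\sum_{g \in G} T^1_{sg}(f)$ into the same average but indexed by $sg$. The key combinatorial step, identical to the one in Proposition \ref{pproph}, is the observation that for fixed $s \in G$ the map $g \mapsto sg$ is a bijection of $G$ onto itself, so re-indexing by $g' = sg$ recovers exactly $\hat{T^1}(f) = \hat{f}$.

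Concretely, the chain is
\begin{align*}
T^1_s(\hat{f}) &= T^1_s\!\left(\frac{1}{|G|}\sum_{g\in G} T^1_g(f)\right)
   = \frac{1}{|G|}\sum_{g\in G} (T^1_s \circ T^1_g)(f) \\
   &= \frac{1}{|G|}\sum_{g\in G} T^1_{sg}(f)
   = \frac{1}{|G|}\sum_{g'\in G} T^1_{g'}(f) = \hat{T^1}(f) = \hat{f}.
\end{align*}
Since $s\in G$ was arbitrary, $\hat{f}$ is fixed by the entire group action, hence an invariant.

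There is no real obstacle here; the argument is essentially a one-line re-indexing. If anything, the only point worth flagging is to make sure one invokes the homomorphism property $T^1_s \circ T^1_g = T^1_{sg}$ (not the opposite order), which is exactly what Proposition \ref{pindact2} gives us.
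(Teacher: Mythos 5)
Your proposal is correct and is essentially the paper's own argument: the paper carries out the same averaging re-indexing, only phrased via Proposition \ref{pprop0} as $g\act\hat{f} = \hat{f}\circ T_{g^{-1}} = \frac{1}{|G|}\sum_{s\in G} f\circ T_{s^{-1}}\circ T_{g^{-1}}$ followed by the substitution $t = gs$, whereas you stay with the induced operators and use $T^1_s \circ T^1_g = T^1_{sg}$ from the left-action property. Both are the same trick as in Proposition \ref{pproph}, so nothing is missing.
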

\begin{proof}
Let $g \in G$ be arbitrary. We will show that $g.\hat{f} = \hat{f}$. Clearly,
from proposition \ref{pprop0} we get that
\begin{align*}
g.\hat{f} &=  \hat{f} \circ T_{g\inv} = (\hat{T^1} (f)) \circ T_{g\inv} \\
          &= \left( \sumg{s}{T_s^1(f) }\right )\circ T_{g\inv} = \left( \sumg{s}{f \circ T_{s\inv} }\right )\circ T_{g\inv}\\
					&= \sumg{s}{f \circ T_{s\inv} \circ T_{g\inv}} = \sumg{t}{f \circ T_{t\inv}} = \hat{f}.
\end{align*}
\end{proof}

Now, we call
$$
R^G := \genset{f\in R}{\forall g \in G :  g\act f = f}
$$
the \kwd{algebra of invariants} of $G$. 
 
\begin{proposition}\label{pinvalg} 
$R^G$ is a subalgebra of $R$. 
\end{proposition}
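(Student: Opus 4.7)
The plan is to verify the subalgebra axioms directly, using Proposition \ref{pindact2}, which tells us that for each $g \in G$ the map $T_g : R \to R$ is an algebra automorphism preserving the grading. In particular, for every $g\in G$, $c\in\setC$, and $f,f'\in R$ we already have
\[
g\act(f+f') = g\act f + g\act f', \quad g\act(cf) = c(g\act f), \quad g\act(f\cdot f') = (g\act f)\cdot(g\act f').
\]
These are exactly the identities we need to push invariance through the ring operations.

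First I would check that $R^G$ is nonempty and contains the unit. The constant polynomial $1 \in R_0 = \setC$ satisfies $(g\act 1)(\vv) = 1(g\inv\act\vv) = 1$, so $1 \in R^G$; more generally every constant is fixed. Then, given $f,f' \in R^G$, the three displayed identities above, combined with $g\act f = f$ and $g\act f' = f'$, give $g\act(f+f') = f+f'$, $g\act(cf) = cf$, and $g\act(f\cdot f') = f\cdot f'$ for all $g\in G$ and $c\in\setC$. This shows $R^G$ is closed under addition, scalar multiplication, and multiplication, so it is a subalgebra of $R$.

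There is no real obstacle here; the work was already done in Proposition \ref{pindact2}, and the present statement is essentially the observation that the fixed-point set of a family of algebra automorphisms is a subalgebra. The only thing to be slightly careful about is to distinguish closure under the algebra operations (which is the content of the statement) from further properties such as closure under the grading: since each $T_g$ preserves $R_d$, the invariant subalgebra inherits the grading $R^G = \bigoplus_d (R^G \cap R_d)$, but that refinement is not strictly required for the proposition as stated.
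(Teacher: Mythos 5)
Your proposal is correct and follows essentially the same route as the paper: the paper likewise reduces everything to the fact that each $g\act(-)$ respects sums, scalars, and products (it re-verifies these pointwise and notes $R^G$ is an intersection of subspaces, checking $0$ and closure under subtraction and multiplication), whereas you simply cite Proposition \ref{pindact2} for those identities. That is a legitimate shortcut, since Proposition \ref{pindact2} already contains exactly the needed computations.
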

\begin{proof}
Since the mapping $f \mapsto g.f$ is linear for every $g\in G$,
$R^G$ is the intersection of subspaces, and hence a subspace.
Let us check the subring conditions in more detail.
For arbritrary  $g \in G$, $f,h \in R^G$, and $\vv \in V$ we have
$g\act f = f$, $g\act h = h$
\begin{enumerate}
	\item For the zero $0 \in R$ we obtain $(g\act 0)(\vv) = 0(g\inv\act \vv ) = 0(\vv)$,
	      so $0 \in R^G$.
	\item We see
	      \begin{align*} 
				g\act (f-h)(\vv) &= (f-h)(g\inv \act \vv) = f(g\inv \act \vv) - h(g\inv \act \vv) \\
				    &= (g \act f)(\vv) -  (g \act h)(\vv) = f(\vv) - h(\vv) = (f-h)(\vv)
				\end{align*}	
	\item Likewise,
	      \begin{align*} 
				g\act (f\cdot h)(\vv) &= (f\cdot h)(g\inv \act \vv) = f(g\inv \act \vv) \cdot  h(g\inv \act \vv) \\
				    &= (g \act f)(\vv) \cdot   (g \act h)(\vv) = f(\vv) \cdot  h(\vv) = (f\cdot h)(\vv).
				\end{align*}					
\end{enumerate}

\end{proof}

Our subalgebra $R^G$ is graded in the same way as $R$.
\begin{proposition}\label{pinvalggraded} 
The algebra of invariants of $G$ is naturally graded as
$$
R^G = \bigoplus_{d \in \setN} R^G_d,
$$ 
where $R^G_d = \genset{f\in R_d}{\forall g \in G :  g\act f = f}$,
called the $d$--th \kwd{homogeneous component} of $R^G$.
\end{proposition}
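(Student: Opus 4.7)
The plan is to reduce this to the known fact that $R = \bigoplus_{d\in\setN} R_d$ is graded together with Proposition \ref{pindact2}, which tells us that each $T_g$ preserves the grading, i.e.\ $g\act R_d \subset R_d$. The statement then splits into two routine inclusions: that the sum on the right is direct (inherited from $R$), and that every invariant decomposes into invariant homogeneous pieces.

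First I would observe that since $R^G_d \subset R_d$ and the $R_d$ already form a direct sum inside $R$, the sum $\sum_{d\in\setN} R^G_d$ is automatically direct. Moreover, each summand is a subspace (intersection of the subspaces $\ker(T_g^d - \id)$ for $g\in G$), so $\bigoplus_{d\in\setN} R^G_d$ is a well-defined subspace of $R^G$.

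For the reverse inclusion, take $f \in R^G$ and use the grading of $R$ to write it uniquely as $f = \sum_{d\in\setN} f_d$ with $f_d \in R_d$ and only finitely many $f_d$ nonzero. For any $g\in G$, Proposition \ref{pindact2} gives $g\act f_d \in R_d$, so
\begin{equation*}
\sum_{d\in\setN} f_d = f = g\act f = \sum_{d\in\setN} g\act f_d
\end{equation*}
is another decomposition of $f$ into homogeneous components. By uniqueness of the homogeneous decomposition in $R$, we get $g\act f_d = f_d$ for every $d$. Since $g$ was arbitrary, $f_d \in R^G_d$, and thus $f \in \bigoplus_{d\in\setN} R^G_d$.

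There is no real obstacle here: the only thing one must not forget to invoke is Proposition \ref{pindact2}, which guarantees that the action of $G$ respects the grading of $R$. Everything else is bookkeeping on the uniqueness of the homogeneous decomposition.
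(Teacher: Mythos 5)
Your proof is correct and follows exactly the route the paper intends: the paper's own proof is just the one-line remark that the statement follows from Propositions \ref{pindact} and \ref{pindact2}, and your argument (decompose an invariant $f$ into homogeneous parts, use linearity and $g\act R_d\subset R_d$ from Proposition \ref{pindact2}, then invoke uniqueness of the homogeneous decomposition) is precisely the standard way to fill in that detail. No gaps; you have simply written out what the paper leaves implicit.
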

\begin{proof}
This follows directly from proposition \ref{pindact}  and proposition \ref{pindact2}.
\end{proof}

\begin{definition}[Molien series]\label{dmolien}
Viewing $R^G_d$ as a vector space, we define
$$
a_d := \dim_\setC R^G_d,
$$
the number of linearly independent homogeneous invariants of degree $d\in \setN$, and
$$
\Phi_G(\lambda) := \sum_{d\in\setN} a_d \lambda^d,
$$ 
the \kwd{Molien series} of $G$.
\end{definition}

Thus, the Molien series of $G$  is an ordinary power series generating 
function whose coefficients are  the numbers of linearly independent homogeneous invariants of degree $d$.
The following beautiful formula gives these numbers, its proof is the 
aim of this paper.

\begin{theorem}[Molien, 1897]\label{tmolien}
$$
\Phi_G(\lambda) := \frac{1}{|G|} \sum_{g\in G} \frac{1}{\det(\id - \lambda T_g)}  
$$
\end{theorem}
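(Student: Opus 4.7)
The strategy is to compute the coefficients $a_d = \dim_{\setC} R^G_d$ one at a time via a trace formula and then package them into the generating function. The key engine is the Reynolds operator on the $d$--th homogeneous component, namely $\hat{T^d} := \frac{1}{|G|}\sum_{g\in G} T_g^d$. The same averaging argument used in section \ref{sreynolda} shows that $\hat{T^d}$ is idempotent, and (as in proposition \ref{ppropg}) its image is exactly $R^G_d$, while it fixes $R^G_d$ pointwise. Hence $\hat{T^d}$ is a projection onto $R^G_d$, and its trace equals $\dim_\setC R^G_d = a_d$. Trace is linear, so
$$
a_d = \trace(\hat{T^d}) = \frac{1}{|G|}\sum_{g\in G} \trace(T_g^d).
$$

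Next I would evaluate $\trace(T_g^d)$ for a fixed $g$ using the diagonalization from section \ref{svictor}. If $\spec(T_g^1) = (\omega_1,\dots,\omega_n)$ with eigenbasis $(y_1,\dots,y_n)$ of $V^\ast$, then proposition \ref{pinducedeigen} tells us that the monomials $y_1^{\epsilon_1}\cdots y_n^{\epsilon_n}$ with $\epsilon_1+\dots+\epsilon_n = d$ form an eigenbasis of $T_g^d$, with eigenvalues $\omega_1^{\epsilon_1}\cdots \omega_n^{\epsilon_n}$. Therefore
$$
\trace(T_g^d) = \sum_{\epsilon_1+\dots+\epsilon_n=d} \omega_1^{\epsilon_1}\cdots \omega_n^{\epsilon_n}.
$$

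Now I would assemble the generating function. Swapping the (absolutely convergent, for $|\lambda|$ small) sums over $d$ and $g$ gives
$$
\Phi_G(\lambda) = \sum_{d\in\setN} a_d \lambda^d = \frac{1}{|G|}\sum_{g\in G} \sum_{d\in\setN} \trace(T_g^d)\,\lambda^d.
$$
For fixed $g$, the inner sum factors as a product of geometric series:
$$
\sum_{d\in\setN} \lambda^d \!\!\sum_{\epsilon_1+\dots+\epsilon_n=d}\! \omega_1^{\epsilon_1}\cdots\omega_n^{\epsilon_n} = \prod_{i=1}^n \sum_{\epsilon_i\in\setN}(\lambda\omega_i)^{\epsilon_i} = \prod_{i=1}^n \frac{1}{1-\lambda\omega_i}.
$$
Finally, since the $\omega_i$ are the eigenvalues of $T_g$ (by proposition \ref{pvictor}(4), up to complex conjugation, which is irrelevant for this argument because we may equally diagonalize $T_g$ directly on $V$ with the same multiset of eigenvalues of absolute value one), we have $\prod_{i=1}^n (1-\lambda\omega_i) = \det(\id - \lambda T_g)$, and Molien's formula drops out.

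The main obstacle, and the step worth writing carefully, is the first one: justifying that $\hat{T^d}$ is the orthogonal projection onto $R^G_d$, i.e.\ that its trace really is $a_d$. Idempotency follows exactly as in proposition \ref{pproph}; the content is that its image is $R^G_d$ (proven by the $g'g$--reindexing trick used in proposition \ref{ppropg}) and that it acts as the identity on $R^G_d$ (immediate from $g\act f = f$ for all $g$). Everything after that is a direct computation with eigenvalues and a geometric series; the only subtlety is the harmless interchange of the finite group sum with the formal (or, for $|\lambda|<1$, convergent) power series in $\lambda$.
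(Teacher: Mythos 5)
Your overall route is the same as the paper's: show $a_d = \trace(\hat{T^d}) = \frac{1}{|G|}\sum_{g\in G}\trace(T_g^d)$, compute $\trace(T_g^d)$ as the sum of all degree--$d$ monomials in the eigenvalues $\omega_1,\dots,\omega_n$ of $T_g^1$ via proposition \ref{pinducedeigen}, and identify that sum with the coefficient of $\lambda^d$ in $\prod_{i=1}^n(1-\lambda\omega_i)\inv$ by expanding geometric series. One detail you handle more cleanly than the paper: you get $\trace(\hat{T^d}) = a_d$ from idempotency alone (the trace of an idempotent is its rank, and the image of $\hat{T^d}$ is $R_d^G$, fixed pointwise), whereas the paper simply appeals to Theorem \ref{t13}, whose proof uses the Hermitian structure that is only set up for $d=1$; just drop the word \emph{orthogonal}, since no inner product on $R_d$, $d>1$, is available or needed.

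The one genuine gap is your final identification. The $\omega_i$ are the eigenvalues of the induced operator $T_g^1$ on $V^\ast$, and by proposition \ref{ppropf} (or \ref{pvictor}) these are the complex conjugates, i.e.\ the inverses, of the eigenvalues of $T_g$ on $V$. So for a fixed $g$ it is in general false that $\prod_i(1-\lambda\omega_i) = \det(\id-\lambda T_g)$; what is true is $\prod_i(1-\lambda\omega_i) = \det(\id-\lambda T_g^1) = \det(\id - \lambda T_{g\inv})$. (Take $n=1$ and $T_g$ multiplication by $e^{2\pi i/3}$: then $\det(\id-\lambda T_g) = 1-\lambda e^{2\pi i/3}$ while $\det(\id-\lambda T_g^1) = 1-\lambda e^{-2\pi i/3}$.) Your parenthetical claim that the conjugation is irrelevant because $T_g$ has ``the same multiset of eigenvalues'' is therefore not right termwise; the conjugation only becomes harmless after summing over the group, since $g\mapsto g\inv$ permutes the summands and hence $\sum_{g\in G}\det(\id-\lambda T_g^1)\inv = \sum_{g\in G}\det(\id-\lambda T_g)\inv$. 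This re-indexing is exactly how the paper closes its proof (the remark following proposition \ref{pvictor}); once you make it explicit, your argument is complete.
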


Following \cite{Sloane1} we first look the number $a_1$ of linearly independent homogeneous invariants of degree $d$.
\begin{theorem}[Theorem 13 in \cite{Sloane1}]\label{t13}
$$
a_1 = \trace (\hat{T}) = \trace (\hat{T^1}) 
$$
\end{theorem}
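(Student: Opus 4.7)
The plan is to identify each Reynolds operator $\hat{T}$ and $\hat{T^1}$ as the orthogonal projection onto the corresponding space of $G$--invariants, and then to read off the dimension of that invariant subspace directly as the operator's trace.

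First I would invoke Proposition \ref{preynolds} together with Proposition \ref{pproph} to record that $\hat{T^1}$ is a self-adjoint idempotent whose spectrum is contained in $\set{0,1}$. Such an operator is unitarily diagonalizable and is in fact the orthogonal projection onto its image, so its trace simply counts the eigenvalue $1$ with multiplicity, giving $\trace(\hat{T^1}) = \dim_\setC \mathrm{Im}(\hat{T^1})$.

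Next I would identify this image with $R^G_1$. One inclusion is already Proposition \ref{ppropg}: for every $f \in V^\ast$ the averaged element $\hat{T^1}(f)$ is $G$--invariant, so $\mathrm{Im}(\hat{T^1}) \subseteq R^G_1$. For the reverse inclusion, if $f \in R^G_1$ then $T^1_g(f) = f$ for every $g \in G$, so
\[
\hat{T^1}(f) = \frac{1}{|G|}\sum_{g \in G} T^1_g(f) = \frac{1}{|G|}\sum_{g \in G} f = f,
\]
which places $f$ in $\mathrm{Im}(\hat{T^1})$. Combining the two inclusions with Definition \ref{dmolien} yields $\trace(\hat{T^1}) = \dim_\setC R^G_1 = a_1$.

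For the remaining equality $\trace(\hat{T}) = \trace(\hat{T^1})$, I would appeal to Proposition \ref{ppropf}, which gives $[T^1_g]_{\vx,\vx} = \overline{[T_g]_{\mcb,\mcb}}$ entrywise. Taking traces and averaging over $G$ produces $\trace(\hat{T^1}) = \overline{\trace(\hat{T})}$. But the same reasoning as above, applied verbatim with $V$, $T_g$, and $\mcb$ in place of $V^\ast$, $T^1_g$, and $\vx$, shows that $\hat{T}$ is also a self-adjoint idempotent and hence that $\trace(\hat{T})$ is a non-negative integer. Being real, it equals its own conjugate, and the desired identity follows. The only even mildly delicate point is the image identification $\mathrm{Im}(\hat{T^1}) = R^G_1$, and this is settled by the standard averaging dichotomy above; everything else is immediate from the propositions already in hand.
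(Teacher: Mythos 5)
Your proof is correct, and for the central identity $a_1 = \trace(\hat{T^1})$ it is essentially the paper's argument: Propositions \ref{preynolds} and \ref{pproph} make $\hat{T^1}$ a self-adjoint idempotent whose trace counts the eigenvalue $1$, and your two inclusions $\mathrm{Im}(\hat{T^1}) \subseteq R^G_1$ (Proposition \ref{ppropg}) and $R^G_1 \subseteq \mathrm{Im}(\hat{T^1})$ (an invariant equals its own average) are exactly the paper's two inequalities $a_1 \le r$ and $a_1 \ge r$, repackaged as the statement that $\hat{T^1}$ is the orthogonal projection onto $R^G_1$. Where you genuinely deviate is the equality $\trace(\hat{T}) = \trace(\hat{T^1})$: the paper obtains it from the remark following Proposition \ref{pvictor}, namely that the multiset union over $g \in G$ of the spectra of the $T_g$ coincides with that of the $T^1_g$ (since $g \mapsto g\inv$ permutes $G$ and passing between the two operators conjugates, i.e.\ inverts, each unimodular eigenvalue), so the averaged traces agree; you instead invoke Proposition \ref{ppropf} to get $\trace(T^1_g) = \overline{\trace(T_g)}$ for every $g$, hence $\trace(\hat{T^1}) = \overline{\trace(\hat{T})}$, and then remove the conjugation by noting that $\trace(\hat{T})$ is real because $\hat{T}$ is itself a self-adjoint idempotent with spectrum in $\set{0,1}$. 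Both routes are valid; yours is arguably more self-contained, resting on the explicit matrix identity \ref{ppropf} and on facts already proved about $\hat{T}$ rather than on the informal spectral remark, whereas the paper's multiset-of-spectra observation is the one it reuses again at the very end of the proof of Molien's Theorem.
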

\begin{proof}
First, we note that the equation $\trace (\hat{T}) = \trace (\hat{T^1}) $ follows from
the remark at the end of section \ref{sreynolda}, since the sum for the 
trace runs over all group elements. Remember that the trace is independent 
of the choice of basis.
From proposition \ref{pproph} we know that both operators are idempotent hermitian
and  $V^\ast$ has a an orthornormal basis $\vf = (\vf_a,\dots, \vf_n)$ 
of eigenvectors of  $\hat{T^1}$, corresponding to the eigenvalues
$\lambda_1, \dots , \lambda_n \in \set{0,1}$, so
$$
[\hat{T^1}]_{\vf,\vf} = \diag(\lambda_1, \dots , \lambda_n).
$$

Let us say that this matrix has $r$ entries $1$ and the remaining $n-d$ entries $0$.
By rearranging the eigenvalues and eigenvectors we may assume that the  
first $r$ entries are $1$ and the remaining $n-d$ are $0$, i.e.
$$
\left ([\hat{T^1}]_{\vf,\vf}\right )_{i,i} = 
\begin{cases}
1 & :  1 \le i \le r\\
0 & :  r+1 \le i \le n.
\end{cases}
$$
Hence $\hat{T^1} (f_i) = f_i$ for  $1 \le i \le r$ and $\hat{T^1} (f_i) = 0$ for $r+1 \le i \le n$.
Any linear invariant of $G$ is certainly fixed by $\hat{T^1}$,
so $a_1 \le r$. On the other hand, by proposition \ref{ppropg}, 
$\hat{f_i} := \hat{T^1} (f_i) = \lambda_i f_i$ is an invariant of $G$ for every $1\le i\le r$,
so $a_1 \ge r$. Together, $a_1 = r$.
\end{proof}

Before the final proof, let us introduce a handy notation.
\begin{definition}\label{dcorfficient}
Let $p(\lambda) \in \setC[\lambda]$ or $p(\lambda) \in \setC[[\lambda]]$. Then
$[\lambda^i]:p(\lambda)$ denotes the \inx{coefficient} of $\lambda^i$ in $p(\lambda)$. 
\end{definition} 

So, for example $[x^2]: 2x^3 + 42x^2 - 6 = 42$ and $[\lambda^d]:\Phi_G(\lambda) = a_d$.
\begin{proof}{(Moliens Theorem)}
We just established the case $d = 1$, so the reader is probably 
expecting a proof by induction over $d$. But this is \emph{not} the case.
Rather, the case $d = 1$ applies to all $d  > 1$.
Note that $a_d$ is equal to the number of linearly independent
invariants of all of the $T_g^d$. So Theorem \ref{t13} gives us
\begin{align*}
a_1 &= \trace (\hat{T}) = \trace (\hat{T^1}) \qquad \mathrm{ and} \qquad\\
a_d &= \trace (\hat{T^d}),
\end{align*}
where the latter includes the first. 
From definition \ref{dreynolds} we also have
$$
 \hat{T^1} = \sumg{g}{T^1_g} 
\quad
\textrm{and in general}
\quad
 \hat{T^d} = \sumg{g}{T^d_g} ,
$$
so we already know that
$$
a_d = \sumg{g}{\trace(T^d_g)}.
$$
So all we need to show is
$$
[\lambda^d]:\frac{1}{|G|} \sum_{g\in G} \frac{1}{\det(\id - \lambda T^1_g)}  =  \sumg{g}{\trace(T^d_g)}.
$$
We will show that for every summand (group element) the equation
$$
[\lambda^d]:  \frac{1}{\det(\id - \lambda T^1_g)} = \trace(T^d_g) 
$$
holds. From proposition \ref{pinducedeigen} we get for every $g\in G$ that
\begin{align*}\trace(T^d_g) &=  \trace(\diag(\tilde{\omega_1}, \dots, \tilde{\omega}_{\tilde{d}})) \\&= 
\tilde{\omega_1} +  \dots +  \tilde{\omega}_{\tilde{d}} =
\end{align*}
sum of the products
of the $\omega_1, \omega_2, \dots ,\omega_n $, taken $d$ of them at a time.
On the other hand, for the same $g\in G$ we 
obtain from section \ref{svictor} that
 $[T_g^1]_{\vy_g^1,\vy_g^1} = \diag(\omega_1,\dots , \omega_n)$
so that

\begin{align*}
\det(\id - \lambda T^1_g) &= \det(\id - \lambda \cdot \diag(\omega_1,\dots , \omega_n) ) \\
                          &= (1 - \lambda \omega_1 )(1 - \lambda \omega_2 )\dots(1 - \lambda \omega_n ),
\end{align*}
so
\begin{align*}
\quad & \frac{1}{\det(\id - \lambda T^1_g)} = \frac{1}{(1 - \lambda \omega_1 )(1 - \lambda \omega_2 )\dots(1 - \lambda \omega_n )} \\
         &= \frac{1}{(1 - \lambda \omega_1) } \cdot  \frac{1}{(1 - \lambda \omega_2)} \cdot  \dots  \frac{1}{(1 - \lambda \omega_n)} \\
				 &= (1 + \lambda \omega_1 + \lambda^2 \omega_1^2 + \dots )(1 + \lambda \omega_2 + \lambda^2 \omega_2^2 + \dots  ) \dots
				    (1 + \lambda \omega_n + \lambda^2 \omega_n^2 + \dots ) 
\end{align*}
and here the coefficient of $\lambda^d$ is also sum of the products
of $\omega_1, \omega_2, \dots ,\omega_n $, taken $d$ of them at a time.

Again, the last claim
$$
\frac{1}{|G|} \sum_{g\in G} \frac{1}{\det(\id - \lambda T_g)} = 
\frac{1}{|G|} \sum_{g\in G} \frac{1}{\det(\id - \lambda T^1_g)}
$$
follows from
the remark at the end of section \ref{preynolds}, since the sum  runs over all group elements. 
\end{proof}

\newpage
\section{Symbol table}\label{ssymbol}

\begin{multicols}{2}
\begin{description}
	\item[$a_d$] number of linearly independent homogeneous invariants of degree $d$
	\item[$\tilde{d}$] Dimension of $R_d$
	\item[$\mcb$] ON basis for $V$
  \item[$G$] Finite group
	\item[$\omega_i$] eigenvalue of $T_g^1$ (\cite{Sloane1} $= w_i$ )
	\item[$P(f)$] \lq\lq Rho\rq\rq\ Riesz vector of $f$.
	\item[$\rho$] Unitary representation $\rho : G \to U(V), g \mapsto T_g$ 
	\item[$R$] Big algebra, direct sum of 
	\item[$R_d$] Direct summand of degree $d$
	\item[$R^G$] Ring of invariants of $d$
	\item[$R^G_d$] Degree $d$ summand 
	\item[$T_g$] representation of $g$ on $V$, (\cite{Sloane1} $ A_\alpha= [T_{g_\alpha}]_{\mcb, \mcb} $ )
	\item[$V$] Complex inner product space 
	\item[$V^\ast$] Algebraic dual of $V$
\end{description}
\end{multicols}

\section{Lost and found}\label{slostfound}

Some things to explore from here:
\begin{itemize}
	\item If we know the conjugacy classes of $G$, we may be able to say more, since every
	      unitary representation splits into irreducible components.  
	\item There seems to be a link to P\'olya enumeration.
	\item We have GAP code, see \cite{GAP4}.
	\item An example would be nice.			
	\item Relations on the generators in $S$ of the Cayley graph $\Gamma(G,S)$
		    should lead to conditions of the minimal polynomial of its adjacency operator $Q(\Gamma(G,S))$. 
	\item Also, Cayley graphs of some finite reflection groups \cite{Hu2} should become accessible.			
	\item Check some more applications, as mentioned in \cite{Sloane1}.			
	\item For finding invariants, check also \cite{Cox}, Gr\"obner bases. 
\end{itemize}

%
%
\addcontentsline{toc}{section}{References}

\addcontentsline{toc}{section}{Index}
\printindex
%
%
\def\thefootnote{} 
\footnote{\texttt{\jobname .tex} Typeset: \today }
%
%
\end{document}